\frenchspacing\usepackage{amsmath,amssymb,amsthm, epsfig}
\numberwithin{equation}{section}
\title{\bf An optimization problem with volume constraint with applications to optimal mass transport}
\author{
Jo\~{a}o Vitor da Silva\thanks{\noindent CONICET and Universidad de Buenos Aires. FCEyN, Department of Mathematics. Ciudad Universitaria-Pabell\'{o}n I-(C1428EGA), \href{tel:54 11 5285 7618/19}{+54 11 5285 7618/19} - Buenos Aires, Argentina. \noindent \texttt{E-mail address: \url{jdasilva@dm.uba.ar} and \url{jrossi@dm.uba.ar}}},\,\,\,
        Leandro M. Del Pezzo\thanks{CONICET and UTDT,
		Departamento de Matem\'aticas y Estad\'istica, Universidad Torcuato Di Tella,
		Av. Figueroa Alcorta 7350 (C1428BCW), \href{tel:54 11 5169 7000}{+54 11 5169 7000},  Buenos Aires, Argentina.
		 \noindent \texttt{E-mail address: \url{ldelpezzo@utdt.edu}}}\,\,\,
		 and \,\,\,Julio D. Rossi\footnotemark[1]
}
\newlength{\hchng}
\newlength{\vchng}
\def \R {\mathbb{R}}
\def \div {\mathrm{div}}
\def \dist {\mathrm{dist}}
\def \suchthat {\colon}
\def \H {\mathcal{H}^{N-1}}
\def \Leb {\mathscr{L}^N}
\def \tr {\mathrm{Tr}}
\newcommand{\defeq}{\mathrel{\mathop:}=}
\newtheorem{theorem}{Theorem}[section]
\newtheorem{lemma}[theorem]{Lemma}
\newtheorem{corollary}[theorem]{Corollary}
\theoremstyle{definition}
\newtheorem{definition}[theorem]{Definition}
\newtheorem{example}[theorem]{Example}
\theoremstyle{remark}
\newtheorem{remark}[theorem]{Remark}
\numberwithin{equation}{section}
\newcommand{\intav}[1]{\mathchoice {\mathop{\vrule width 6pt height 3 pt depth  -2.5pt
\kern -8pt \intop}\nolimits_{\kern -6pt#1}} {\mathop{\vrule width
5pt height 3  pt depth -2.6pt \kern -6pt \intop}\nolimits_{#1}}
{\mathop{\vrule width 5pt height 3 pt depth -2.6pt \kern -6pt
\intop}\nolimits_{#1}} {\mathop{\vrule width 5pt height 3 pt depth
-2.6pt \kern -6pt \intop}\nolimits_{#1}}}
\begin{document}
\maketitle

\begin{abstract}
    In this manuscript we study the following optimization problem with volume constraint:
    $$
        \text{min} \left\{\frac{1}{p}\int_{\Omega} |\nabla v|^pdx-
        \int_{\partial \Omega} gv\,d\H \colon v \in W^{1, p}
        \left(\Omega\right), \text{ and }
        \Leb(\{v>0\}) \leq \alpha\right\}.
    $$
    Here $\Omega \subset \R^N$ is a bounded and smooth domain, $g$ is a
    continuous function and $\alpha$ is a fixed constant such that
    $0< \alpha< \Leb(\Omega)$.
    Under the assumption that
    $\displaystyle \int_{\partial \Omega} g(x)d\H >0$
    we prove that a minimizer exists and satisfies
    $$
        \left\{
            \begin{array}{ccl}
                -\Delta_p u_p  =  0 &\text{in }
                &\{u_p>0\} \cup \{u_p<0\}, \\[5pt]
                |\nabla u_p|^{p-2}\frac{\partial u_p}{\partial \nu}  =  g
                 & \text{on }  &\partial \Omega
                 \cap\partial(\{u_p>0\} \cup \{u_p<0\} ) ,\\[5pt]
                \Leb(\{u_p>0\}) = \alpha. & &
            \end{array}
        \right.
    $$

    Next, we analyze the limit as $p\to \infty$. We obtain that any sequence of weak solutions converges, up to a
    subsequence, $\displaystyle \lim_{p_j \to \infty} u_{p_j}(x)=u_{\infty}(x)$,
    uniformly in $\overline{\Omega}$, and uniform limits, $u_\infty$, are
    solutions to the maximization problem with volume constraint
    $$
        \max\left\{ \int_{\partial \Omega} gv\,d\H \suchthat v \in W^{1, \infty}
        \left(\Omega\right), \|\nabla v\|_{L^{\infty}(\Omega)}\leq 1
        \text{ and }\Leb(\{v>0\}) \leq \alpha\right\}.
    $$
    Furthermore, we obtain the limit equation that is verified by $u_\infty$ in the viscosity sense. Finally, it turns out that such a limit variational problem is connected to the Monge-Kantorovich mass transfer problem with the involved measures are supported on $\partial \Omega$ and along the limiting free boundary, $\partial \{u_{\infty} \neq 0\}$. Furthermore, we show some explicit examples of solutions for certain configurations of the domain and data.
\newline
\\
\noindent \textbf{Keywords}: Optimization problems, volume constraint, Neumann boundary condition, Infinity-Laplace operator, Monge-Kantorovich problem.\\
\\
\noindent \textbf{AMS Subject Classifications:} 35B65, 35J20, 35J60, 35J66;
\end{abstract}

\newpage
\section{Introduction}

\subsection*{Motivation and historic overview}

    In shape optimization theory an \textit{Optimal Design Problem} under a volume constraint reads as follows: For an $\Omega \subset \R^N$ (smooth and bounded domain) and $0< \alpha < \Leb(\Omega)$ a fixed amount, we would like to find a best configuration $\mathcal{O} \subset \Omega$ such that minimizes a functional (cost) associated to a certain process, under the prescription of the maximum volume to be used. Mathematically this can be written as
\begin{equation}\label{ODP}
    \text{min} \,\,\left\{\mathfrak{J}_{\alpha}[u_{\Xi}]\suchthat \,\, u_{\Xi} \in \mathbb{X}(\Omega, \R)\,(\text{admissible class}), \,\,\Xi \subset \Omega \,\, \text{such that} \,\,u_{\Xi} >0  \,\,\, \text{in}\,\,\,\Xi \,\,\,\text{and}\,\,\, 0<\Leb(\Xi) \leq \alpha\right\}.
\end{equation}
    In several situations the functional $\mathfrak{J}_{\alpha}[u_{\Xi}]$ admits a variational representation, whose involved extremal functions are linked to the competing configuration $\Xi$ via a prescribed PDE. Some examples of such models appear as elliptic PDEs (eigenvalue problems with geometric constraints, shape optimization problems with constrained perimeter or volume), optimal design of semiconductor devices and problems in structural optimization, optimization problems with free boundaries, just to mention a few (cf. \cite{BucBut} for a large number of illustrative examples).

    Concerning free boundary optimization problems under volume constraint, its beginning dates back to the middle 80s. In the seminal work \cite{AAC} the authors study existence, regularity and geometric properties for minimizers of the optimization problem
    $$
    \displaystyle \min \,\,\left\{ \int_{\Omega} |\nabla v|^2dx: v \in W^{1.
    , p} (\Omega), \,\,\Delta v = 0 \,\,\, \text{in} \,\,\, \{v>0\}\cap \Omega, \,\,\,  u=g \,\,\, \text{on}\,\,\, \partial \Omega \,\,\, \mbox{and} \,\,\, \Leb(\{v=0\}) = \alpha\right\}.
    $$
    In the same direction, we also quote \cite{BMW} and \cite{OliTei}, where optimal design problems governed by quasi-linear operators of $p$-Laplace type were studied (the associated functional is $\displaystyle \mathfrak{J}_{\alpha}[v_{\Xi}] = \int_{\Omega} |\nabla v_{\Xi}|^pdx$). See also \cite{Tei1,Tei2,Tei3} and references therein concerning shape optimization problems in heat conduction, in this case $u$ represents the temperature in $\Omega$ of a heated body with non-constant prescribed temperature distribution $g$ on the boundary.

    We finish this quick overview by commenting the limiting (as $p\to \infty$) optimization problem treated in \cite{RT} (cf. \cite{RW} for a corresponding problem in the two-phase scenery and \cite{daSR} for a nonlocal counterpart). There it is considered the following limiting problem:

\begin{equation}\label{LimProb}
\displaystyle \min\left\{\sup_{x, y \in \Omega \atop{x\neq y}} \frac{|v(x)-v(y)|}{|x-y|}: v\in W^{1, \infty}(\Omega), \,\,\, v=g\geq 0 \,\, \text{on}\,\, \partial \Omega\,\,\,\text{and}\,\,\,\Leb(\{v>0\}) \leq \alpha\right\}.
\end{equation}
    In particular, in \cite{RT} extremals for \eqref{LimProb} are obtained as limit points of minimizers $(u_p)_{p \geq 2}$ of the following free boundary optimization problem:
\begin{equation}\label{p-LimProb}
\displaystyle \min\left\{\int_{\Omega} |\nabla u_p|^p: u_p\in W^{1, p}(\Omega), \,\,\, \Delta_p\, u_p = 0 \,\,\,\text{in}\,\,\,
\{u_p>0\}, \,\,\,u_p=g \geq 0 \,\, \text{on}\,\, \partial \Omega\,\,\,\text{and}\,\,\,\Leb(\{u>0\}) \leq \alpha\right\}.
\end{equation}
    Furthermore, such limit solutions verify
    $$
  \left\{
    \begin{array}{rclcl}
   \Delta_{\infty} u_{\infty}(x) & = & 0 & \mbox{in} & \{u_{\infty}>0\}, \\
   u_{\infty}(x) & = & g(x) & \mbox{on} & \partial \Omega,
   \end{array}
    \right.
    $$
    in the viscosity sense (Section \ref{Prelim} for such a concept), where
    $$
  \Delta_{\infty} \, v(x) \defeq  \nabla v^T(x)D^2v(x)\nabla v(x) = \sum_{i, j=1}^{N} \frac{\partial v}{\partial x_j}(x)\frac{\partial^2 v}{\partial x_j \partial x_i}(x) \frac{\partial v}{\partial x_i}(x)
    $$
    is the nowadays well-known \textit{$\infty$-Laplace operator}, which is naturally associated to \textit{Absolutely Minimizing Lipschitz Extensions} (cf. \cite{Arons} and \cite{ACJ} for comprehensive surveys about this subject). Notice that,  the $\infty-$Laplacian is a degenerate elliptic operator with non-divergence structure, see Section \ref{Prelim} for more details.

    With regards to nonlinear PDEs with Neumann type boundary conditions and viscosity solutions involving the outer normal derivative, i.e., $\frac{\partial u}{\partial \eta}$, the corresponding theory is quite more recent and we must quote \cite{Barl,CIL,Cran,Ish} and \cite{IL} as precursor works. In particular, such references establish uniqueness, comparison theorems, H\"{o}lder and Lipschitz regularity for solutions of general fully nonlinear elliptic equations (under suitable structural assumptions).

    On the other hand, in \cite{G-AMPR} it is studied the Neumann problem for the $\infty-$Laplace operator. The approach used there consists of analysing the limit as $p\to \infty$ of solutions to
    $$
  \left\{
    \begin{array}{rclcl}
   -\Delta_p u_p(x) & = & 0 & \mbox{in} &  \Omega,  \\
   |\nabla u_p(x)|^{p-2}\frac{\partial u_p}{\partial \eta}(x) & = & g(x) & \mbox{on} & \partial \Omega,
   \end{array}
    \right.
    $$
    with a continuous boundary flow $g$ verifying $\displaystyle \int_{\partial \Omega} g= 0$. In particular, it is proved that there exist limit points of $(u_p)_{p \geq 2}$ as $p \to \infty$. Furthermore, such limit points are maximizers of following variational problem:
\begin{equation}\label{EqLMP}
  \max\left\{ \int_{\partial \Omega} gv\,d\H \suchthat v \in W^{1, \infty}\left(\Omega\right),\,\,  \|\nabla v\|_{L^{\infty}(\Omega)}\leq 1 \,\,\text{and} \,\, \int_{\Omega} v = 0\right\}.
\end{equation}
    Another important piece of information is that limit points are viscosity solutions to
    $-\Delta_{\infty} u_{\infty}(x)  =  0 $ in $\Omega$ with $\mathrm{H}(x, u, \nabla u)  =  0$ on $ \partial \Omega$, a boundary condition that depends only on the sign of $g$, see \cite[Theorem 1.2]{G-AMPR} for more details.

\subsection*{Statement of the main results}

    Our main goal is the study of quasi-linear operators with $p$-Laplacian type structure with a volume constraint and Neumann boundary conditions and pass to the limit as $p\to \infty$. We consider the following optimization problem:
    {\small{
\begin{equation}\tag{{\bf $\mathfrak{P}_p$}}\label{eqMin_ps}
   \displaystyle  \mathfrak{P}_{p}[\alpha] \coloneqq
    \min \left\{\frac{1}{p}\int_{\Omega} |\nabla v|^pdx- \int_{\partial \Omega} gv\,d\H \suchthat v \in W^{1, p}\left(\Omega\right) \,\,\, \text{and} \,\,\, \Leb(\{v>0\}) \leq \alpha\right\}.
\end{equation}}}

    This kind of model (involving the $p-$Laplacian operator with Neumann boundary conditions) appears in a number of structural optimization, shape optimization and optimal design problems in pure and applied mathematics, as well as in the theory of some non-Newtonian fluids, reaction diffusion problems, etc. From an applied point of view one can think that we are prescribing the flux (a balance) across the boundary and trying to find the best of all configurations which minimizes a certain (physical) cost within a prescribed objective (class of admissible profiles) and a given set of geometrical limitations (constrained volume) in our procedure (cf. \cite{BucBut,Diaz} and references therein for nice essays about shape optimization and nonlinear PDEs theory, and compare with \cite{AAC,daSR,BMW,OliTei, Tei1, Tei2} and \cite{Tei3} for optimal design problems with constrained volume and Dirichlet boundary condition).

    For a datum $g$ such that $\displaystyle \int_{\partial \Omega} g(x)d\H >0$ the minimization problem admits at least one solution, but its existence is a non-trivial task, see Remark \ref{rem24} for more details. In this case, existence of a minimizer follows by using the direct method in calculus of variations, key tools comes from mathematical analysis and the construction of a suitable competitor profile in \eqref{eqMin_ps}.

 \begin{theorem}[{Existence of minimizers}]\label{ThmExistMin.intro}
        Let $p>N,$ $g\in L^1(\partial\Omega)$ be such that
        \[
            \int_{\partial \Omega} g \,d\H >0
        \]
        and $0< \alpha < \Leb(\Omega)$ fixed.
        Then, there is at least one function $u_p$
        solving \eqref{eqMin_ps}.

    Moreover, any minimizer $u_p$ is a weak solution to the following Neumann problem:
\begin{equation}\label{eqp}
  \left\{
\begin{array}{rclcl}
   -\Delta_p \,u_p(x) & = & 0 & \mbox{in} & \{u_p> 0\} \cup \{u_p< 0\}, \\
   |\nabla u_p(x)|^{p-2}\frac{\partial u_p}{\partial \eta}(x) & = & g(x) & \mbox{on} & \partial \Omega
   \cap \partial(\{u_p> 0\} \cup \{u_p< 0\}),
   \end{array}
\right.
\end{equation}
    and verifies
        \[
            \Leb(\{u>0\})=\alpha.
        \]

        In addition, if the domain is a ball, $\Omega=B_1(0)$ and $g$ is
        non-negative, spherically symmetric and strictly spherically decreasing
        with respect to an axis,
        then every minimizer is also spherically symmetric on $\partial B_1(0)$  with respect to this axis.
    \end{theorem}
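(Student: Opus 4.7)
My plan is to prove existence by the direct method of the calculus of variations, derive the Euler--Lagrange PDE and Neumann flux condition through localized admissible perturbations, establish saturation of the volume constraint using the sign hypothesis $\int_{\partial\Omega} g>0$, and obtain the boundary symmetry via spherical cap symmetrization.

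\textbf{Existence.} Write $J_p(v) \defeq \frac{1}{p}\int_\Omega|\nabla v|^p\,dx - \int_{\partial\Omega} g v\,d\H$ and $\bar v \defeq \intav{\Omega} v\,dx$. Since $p>N$, combining Poincar\'e with Morrey gives $\|v-\bar v\|_{L^\infty(\overline\Omega)} \leq C\|\nabla v\|_{L^p(\Omega)}$. On the admissible class, the strict inequality $\alpha<\Leb(\Omega)$ and this pointwise bound force $\bar v \leq C\|\nabla v\|_{L^p}$, for otherwise $v>0$ on all of $\Omega$. Splitting $\int_{\partial\Omega} g v = \bar v\int_{\partial\Omega} g + \int_{\partial\Omega} g(v-\bar v)$ and estimating the oscillatory term by the Morrey bound yields
\[
    J_p(v) \geq \tfrac{1}{p}\|\nabla v\|_{L^p}^p - \bar v\int_{\partial\Omega} g - C\|\nabla v\|_{L^p}.
\]
Together with the upper bound on $\bar v$ this controls $\|\nabla v_k\|_{L^p}$ along a minimizing sequence; rearranging the same identity and using $\int_{\partial\Omega} g>0$ then controls $\bar v_k$ from below, so $\|v_k\|_{W^{1,p}}$ is bounded. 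Morrey's embedding provides weak $W^{1,p}$ and uniform convergence to some $u_p$. For every $\epsilon>0$, the inclusion $\{u_p>\epsilon\}\subset\{v_k>0\}$ for $k$ large gives $\Leb(\{u_p>0\})\leq\alpha$. Weak lower semicontinuity of the $p$-Dirichlet integral and continuity of the boundary integral under uniform convergence (using $g\in L^1(\partial\Omega)$) close the existence argument.

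\textbf{Euler--Lagrange, Neumann condition, and saturation.} For $\phi \in C_c^\infty(\{u_p>0\})$, continuity of $u_p$ makes $u_p+s\phi$ strictly positive on $\operatorname{supp}\phi$ for $|s|$ small, so both the positivity set and the volume constraint are preserved; differentiating $J_p(u_p+s\phi)$ at $s=0$ yields $-\Delta_p u_p=0$ in $\{u_p>0\}$, and the same argument works in $\{u_p<0\}$. For the flux condition, pick $\phi \in C^\infty(\overline\Omega)$ supported in a neighborhood of a point $x_0 \in \partial\Omega$ with $u_p(x_0)\neq 0$; the sign of $u_p$ is preserved there under small perturbations, so integration by parts gives $|\nabla u_p|^{p-2}\partial_\nu u_p=g$ on $\partial\Omega\cap\partial(\{u_p>0\}\cup\{u_p<0\})$. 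To see $\Leb(\{u_p>0\})=\alpha$, suppose the inequality is strict: one checks that $\Leb(\{u_p=0\})=0$ (consequence of $p$-harmonicity on $\{u_p>0\}\cup\{u_p<0\}$ and unique continuation) so that $\Leb(\{u_p+c>0\})\to\Leb(\{u_p>0\})$ as $c\to 0^+$; choosing such a $c>0$ small yields an admissible competitor with $J_p(u_p+c)=J_p(u_p)-c\int_{\partial\Omega} g<J_p(u_p)$, contradicting minimality. An alternative route is to note that an inactive constraint would force $u_p$ to solve the full unconstrained Neumann problem, incompatible with $\int_{\partial\Omega}g>0$ by the divergence theorem.

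\textbf{Symmetry on $\partial B_1(0)$.} Let $u_p^\star$ denote the spherical cap symmetrization of $u_p$ about the given axis. Classical rearrangement theory gives equimeasurability $\Leb(\{u_p^\star>0\})=\Leb(\{u_p>0\})\leq\alpha$ and the P\'olya--Szeg\H{o} inequality $\int|\nabla u_p^\star|^p\leq\int|\nabla u_p|^p$; the Hardy--Littlewood inequality on $\partial B_1(0)$, applied to the trace of $u_p$ against the monotone $g$, yields $\int_{\partial B_1} g u_p^\star\,d\H \geq \int_{\partial B_1} g u_p\,d\H$. Hence $J_p(u_p^\star)\leq J_p(u_p)$ and $u_p^\star$ is a minimizer as well, so equality must hold throughout. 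Strict spherical monotonicity of $g$ then forces equality in the Hardy--Littlewood step, which forces $u_p|_{\partial B_1(0)} = u_p^\star|_{\partial B_1(0)}$, the desired boundary symmetry. The main technical obstacle throughout is the coercivity estimate: the negative sign of the boundary term and the one-sided nature of the volume constraint resist standard Poincar\'e arguments and are only neutralized by the joint use of $p>N$ (turning the constraint into a pointwise bound on $\bar v$), $\alpha<\Leb(\Omega)$, and $\int_{\partial\Omega} g>0$.
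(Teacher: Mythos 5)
Your existence argument is sound and is a legitimate variant of the paper's: where the paper normalizes the minimizing sequence so that each $u_j$ vanishes at some point $x_j\in\overline\Omega$ (shifting by a constant when $u_j<0$ everywhere, which lowers the functional because $\int_{\partial\Omega}g\,d\H>0$) and then applies Morrey's inequality around that zero, you instead control the mean $\bar v$ through the observation that $\bar v>C\|\nabla v\|_{L^p}$ would force $v>0$ on all of $\Omega$, contradicting $\alpha<\Leb(\Omega)$; both routes use the same three hypotheses in the same roles, and yours is complete. Your derivation of the Euler--Lagrange equation and of the Neumann condition by sign-preserving perturbations is more detailed than the paper (which delegates this to \cite{G-AMPR}), and your symmetrization argument (P\'olya--Szeg\"o plus the Hardy--Littlewood inequality on the sphere, with the equality case forced by strict monotonicity of $g$) is exactly the content of Theorem \ref{reluu*}, Remark \ref{rem-symm} and Theorem \ref{SymmetThm}.

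The saturation step $\Leb(\{u_p>0\})=\alpha$ is where your proof breaks down, and both of your proposed routes fail for the same reason: they require the zero set $\{u_p=0\}$ to be negligible. Indeed, $\Leb(\{u_p+c>0\})\geq\Leb(\{u_p\geq 0\})$ for every $c>0$, and the ``inactive constraint'' reasoning only lets you test with arbitrary $\phi$ when $\Leb(\{u_p\geq 0\})<\alpha$; so in the dangerous regime $\Leb(\{u_p>0\})<\alpha\leq\Leb(\{u_p\geq 0\})$ neither argument produces an admissible competitor. The claim $\Leb(\{u_p=0\})=0$ does not follow from $p$-harmonicity on $\{u_p\neq 0\}$ together with unique continuation, because unique continuation says nothing about the complement of the set where the equation holds; worse, the claim is false for the minimizers of this problem: in the first example of Section \ref{Sec5} the minimizer vanishes identically on an interval of length $2-\alpha>0$. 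The paper's proof of \eqref{eq:lba} avoids the issue by a purely local surgery: assuming $\Leb(\{u>0\})=\alpha-\varepsilon$, it picks a free boundary point $x_0\in\partial\{u>0\}\cap\Omega$, a ball $B_r(x_0)$ with $\Leb(B_r(x_0))<\varepsilon$, and replaces $u$ in $B_r(x_0)$ by the $p$-harmonic function with the same boundary values. This strictly decreases the Dirichlet energy (since $u$ is not $p$-harmonic across the free boundary), leaves the boundary integral untouched, and enlarges the positivity set by at most $\Leb(B_r(x_0))<\varepsilon$, so the competitor remains admissible and minimality is contradicted. You need this local replacement (or some equally localized perturbation) in place of the global vertical shift.
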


    Notice that we don't have $|\nabla u_p(x)|^{p-2}\frac{\partial u_p}{\partial \eta}(x) = g(x)$ on the whole
    $\partial \Omega$. In fact, it could happen that the solution vanish on some part of $\partial \Omega$ and
    the Neumann boundary condition does not hold there, see Remark \ref{rem.33} for a simple one-dimensional
    example where this phenomenon takes place.

    It is worth to highlight that analytical and geometric features of the limiting (as $p\to \infty$) free boundary problem reveal asymptotic information on the optimal design problem \eqref{eqMin_ps} for $p$ large. Hence, motivated by formal considerations, we consider the following limiting configuration:

\begin{equation}\tag{{\bf $\mathfrak{P}_{\infty}$}}\label{eqMax_infty}
   \displaystyle  \mathfrak{P}_{\infty}[\alpha] \coloneqq  \max\left\{ \int_{\partial \Omega} gv\,d\H \suchthat v \in W^{1, \infty}\left(\Omega\right),\,\,\,  \|\nabla v\|_{L^{\infty}(\Omega)}\leq 1 \,\,\text{and} \,\, \Leb(\{v>0\}) \leq \alpha\right\}.
\end{equation}

    This problem might be called an ``$L^{\infty}-$variational problem'' because of the $L^{\infty}-$bound on the gradient, and because it arises as the limit for the constrained optimization problem \eqref{eqMin_ps} as $p\to \infty$.

    Under the assumption that $g$ is such that $\displaystyle \int_{\partial \Omega} g(x)d\H >0$, we prove here that any sequence of minimizers $u_p$ to \eqref{eqMin_ps} converges, up to a subsequence, to a solution $u_{\infty}$ of the limiting problem \eqref{eqMax_infty}.

\begin{theorem}\label{teo.1.intro} Assume that $\displaystyle \int_{\partial \Omega} g(x)d\H >0$ and let $u_p$ be a minimizer to \eqref{eqMin_ps}. Then, up to a subsequence,
$$
  u_p \to u_{\infty} \quad  \mbox{as} \quad p \to \infty,
$$
uniformly in $\overline{\Omega}$ and weakly in $W^{1, q}(\Omega)$ for all $1 < q < \infty$. Furthermore, such a limit is an extremal of \eqref{eqMax_infty}.
\end{theorem}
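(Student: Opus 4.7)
The plan is to follow the asymptotic-analysis template used in \cite{RT} and \cite{G-AMPR}: derive uniform $W^{1,q}$ estimates for $\{u_p\}$ (for any fixed $q>N$), use Morrey's compact embedding to extract a subsequence converging uniformly on $\overline\Omega$, verify that the limit $u_\infty$ belongs to the admissible class for $\mathfrak{P}_\infty[\alpha]$, and finally compare $u_\infty$ against arbitrary admissible competitors to conclude optimality.

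For the uniform estimates, I would test the minimizing inequality against the admissible competitor $v_0\equiv 0$, which yields
\[
\frac{1}{p}\int_{\Omega}|\nabla u_p|^p\,dx - \int_{\partial\Omega} g u_p\,d\H \;\le\; \frac{\Leb(\Omega)}{p}.
\]
The boundary term is handled by splitting $\int_{\partial\Omega} g u_p = \bar u_p\int_{\partial\Omega} g + \int_{\partial\Omega} g(u_p-\bar u_p)$, with the second piece bounded by $C\|\nabla u_p\|_{L^p(\Omega)}$ through the trace embedding and Poincar\'{e}--Wirtinger. The volume constraint $\Leb(\{u_p\le 0\})\ge \Leb(\Omega)-\alpha>0$ provides a genuine Poincar\'{e} inequality for $u_p^+$, which combined with Morrey's embedding (for some fixed $q>N$) bounds $\bar u_p$ in terms of $\|\nabla u_p\|_{L^q}$. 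Finally, H\"{o}lder's inequality $\|\nabla u_p\|_{L^q}\le \Leb(\Omega)^{1/q-1/p}\|\nabla u_p\|_{L^p}$ combined with a Young-type absorption closes the circular dependence, yielding $\|u_p\|_{W^{1,q}(\Omega)}\le C_q$ uniformly in $p$.

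A diagonal argument then produces a subsequence (still denoted $u_p$) and a limit $u_\infty$ with $u_p\rightharpoonup u_\infty$ in $W^{1,q}(\Omega)$ for every $q>N$ and $u_p\to u_\infty$ uniformly on $\overline\Omega$ by the compact Morrey embedding. Weak convergence in $W^{1,q}$ for every $q\in(1,\infty)$ follows from the continuous embeddings $W^{1,q_1}\hookrightarrow W^{1,q_2}$ for $q_1>q_2$ on bounded domains. Admissibility of $u_\infty$ for $\mathfrak{P}_\infty[\alpha]$ is checked term by term: the gradient bound follows from
\[
\Bigl(\Leb(\Omega)^{-1}\int_\Omega |\nabla u_\infty|^q\,dx\Bigr)^{1/q} \;\le\; \liminf_{p\to\infty}\Bigl(\Leb(\Omega)^{-1}\int_\Omega |\nabla u_p|^p\,dx\Bigr)^{1/p} \;\le\; 1
\]
upon letting $q\to\infty$; the volume constraint is preserved because uniform convergence forces $\{u_\infty>0\}\subset\liminf_p\{u_p>0\}$, so Fatou's lemma for sets yields $\Leb(\{u_\infty>0\})\le\alpha$.

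For the optimality, any admissible $v$ for $\mathfrak{P}_\infty[\alpha]$ (i.e.\ $\|\nabla v\|_\infty\le 1$, $\Leb(\{v>0\})\le\alpha$) is automatically admissible for $\mathfrak{P}_p[\alpha]$, and the minimality of $u_p$ combined with $\frac{1}{p}\int|\nabla v|^p\le \Leb(\Omega)/p$ gives
\[
-\int_{\partial\Omega} g u_p\,d\H \;\le\; \frac{\Leb(\Omega)}{p}-\int_{\partial\Omega} g v\,d\H.
\]
Passing $p\to\infty$, uniform convergence on $\partial\Omega$ yields $\int_{\partial\Omega} g u_\infty\,d\H\ge\int_{\partial\Omega} g v\,d\H$ for every admissible $v$. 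The principal technical obstacle lies in the first step: one must close the a priori circular inequality between $\|\nabla u_p\|_{L^p}$ and the boundary trace $\int_{\partial\Omega} g u_p$ in a way uniform in $p$, which is precisely why the intermediate detour through a fixed exponent $q>N$ (and the $L^\infty$ control supplied by Morrey) is indispensable.
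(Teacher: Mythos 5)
Your proposal is correct and follows the same overall skeleton as the paper: a uniform-in-$p$ bound on $\|\nabla u_p\|_{L^p(\Omega)}$, the detour through a fixed exponent $q>N$ with H\"older and the compact Morrey embedding to extract a uniformly convergent subsequence, and the comparison of $u_\infty$ against admissible competitors $v$ using $\tfrac1p\int_\Omega|\nabla v|^p\,dx\le \Leb(\Omega)/p\to0$. The one place where you genuinely diverge is the a priori estimate itself: the paper tests the weak Euler--Lagrange equation with $u_p$ to get the identity $\int_\Omega|\nabla u_p|^p\,dx=\int_{\partial\Omega}gu_p\,d\H$, and then combines the trace inequality (with its explicit $\sqrt[p]{pC_0}$ constant) with the Morrey inequality applied to $u_p$, which vanishes somewhere because $\Leb(\{u_p>0\})=\alpha<\Leb(\Omega)$; you instead use only the minimality $\mathcal{J}_p[u_p]\le\mathcal{J}_p[0]$ together with the splitting of $\int_{\partial\Omega}gu_p$ around the mean $\bar u_p$, bounding $\bar u_p$ from above via the Poincar\'e inequality that the volume constraint supplies for $u_p^{+}$ (here the sign hypothesis $\int_{\partial\Omega}g\,d\H>0$ is what makes an upper bound on $\bar u_p$ sufficient). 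Your route avoids invoking the Euler--Lagrange equation at all, at the price of the extra mean-value bookkeeping; both yield $\limsup_p\|\nabla u_p\|_{L^p(\Omega)}\le1$, after which the lower-semicontinuity argument for $\|\nabla u_\infty\|_{L^\infty(\Omega)}\le1$, the preservation of the volume constraint under uniform convergence (which you check more explicitly than the paper does), and the competitor comparison coincide with the paper's. One cosmetic slip: testing against $v_0\equiv0$ gives $\mathcal{J}_p[u_p]\le0$ rather than $\le\Leb(\Omega)/p$, though the weaker inequality you wrote is still true and suffices.
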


    Furthermore, we find that $u_{\infty}$ verifies $-\Delta_{\infty} u_{\infty}(x) =  0$ (in the viscosity sense) in the set $\Omega_{\infty} \defeq \{u_{\infty}>0\} \cup \{u_{\infty}<0\}$ (notice that we just have $u_\infty=0$ in $\Omega \setminus \Omega_{\infty}$). We also compute the limit boundary condition.

\begin{theorem}\label{teo.2.intro} A uniform limit of solutions of \eqref{eqMin_ps} fulfils
\begin{equation}\label{eqlim}
F_{\infty}(x, \nabla u_{\infty}, D^2u_{\infty}) \defeq  \left\{
\begin{array}{rclcl}
   -\Delta_{\infty} u_{\infty}(x) & = & 0 & \mbox{in} & \{u_{\infty}> 0\} \cup \{u_{\infty}< 0\},  \\[5pt]
     u_{\infty}(x) & = & 0 & \mbox{in} & \Omega \setminus ( \{u_{\infty}> 0\} \cup \{u_{\infty}< 0\}),  \\[5pt]
  H(x, \nabla u) & = & 0 & \mbox{on} & \partial \Omega
  \cap\partial(\{u_{\infty}> 0\} \cup \{u_{\infty}< 0\}),
   \end{array}
\right.
\end{equation}
in the viscosity sense, where
$$
\begin{array}{ll}
 H(x, \nabla u) \defeq
  \left\{
\begin{array}{rcl}
   \min\left\{|\nabla u|-1, \frac{\partial u}{\partial \eta}\right\}  & \mbox{if} & x \in \{g>0\}, \\[5pt]
   \max\left\{1-|\nabla u|, \frac{\partial u}{\partial \eta}\right\}  & \mbox{if} & x \in \{g<0\}, \\[5pt]
   \frac{\partial u}{\partial \eta}&\mbox{if} & x \in \{g=0\}.
\end{array}
\right.
\end{array}
$$
\end{theorem}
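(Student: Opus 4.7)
The strategy is to first verify that each minimizer $u_p$ satisfies \eqref{eqp} in the viscosity sense on its active region and boundary, and then pass to the limit $p\to\infty$ using the uniform convergence provided by Theorem~\ref{teo.1.intro}. The interior viscosity formulation of $-\Delta_p u_p=0$ on $\{u_p>0\}\cup\{u_p<0\}$ follows from the classical Juutinen--Lindqvist--Manfredi equivalence between weak and viscosity $p$-harmonic functions, while the Neumann condition $|\nabla u_p|^{p-2}\partial_\eta u_p=g$ on the active part of $\partial\Omega$ is read in the generalized viscosity sense, i.e.\ with the usual $\min$/$\max$ alternative between the interior PDE and the boundary operator.

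For the interior equation, fix $x_0\in\{u_\infty>0\}\cup\{u_\infty<0\}$ and $\phi\in C^2$ touching $u_\infty$ strictly from above at $x_0$. Uniform convergence produces $x_p\to x_0$ with $u_p-\phi$ locally maximal at $x_p$, and for $p$ large $x_p\in\Omega$ with $u_p(x_p)\neq 0$. Expanding the viscosity inequality as
\[
-|\nabla\phi(x_p)|^{p-4}\bigl(|\nabla\phi(x_p)|^2\Delta\phi(x_p)+(p-2)\Delta_\infty\phi(x_p)\bigr)\leq 0,
\]
dividing by $(p-2)|\nabla\phi(x_p)|^{p-4}$ when $\nabla\phi(x_0)\neq 0$, and sending $p\to\infty$, yields $-\Delta_\infty\phi(x_0)\leq 0$. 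The supersolution inequality is symmetric, and the condition $u_\infty\equiv 0$ on $\Omega\setminus(\{u_\infty>0\}\cup\{u_\infty<0\})$ is immediate from continuity.

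For the boundary condition at $x_0\in\partial\Omega\cap\partial(\{u_\infty>0\}\cup\{u_\infty<0\})$, the same touching argument produces $x_p\to x_0$. If infinitely many $x_p$ are interior, the previous step applies; otherwise we may assume $x_p$ lies on the active part of $\partial\Omega$, giving the viscosity subsolution relation
\[
\min\bigl\{-\Delta_p\phi(x_p),\ |\nabla\phi(x_p)|^{p-2}\partial_\eta\phi(x_p)-g(x_p)\bigr\}\leq 0.
\]
When $g(x_0)>0$ and the second term is $\leq 0$ along a subsequence: if $\partial_\eta\phi(x_0)>0$, extracting $(p-2)$-th roots gives $|\nabla\phi(x_p)|\leq\bigl(g(x_p)/\partial_\eta\phi(x_p)\bigr)^{1/(p-2)}\to 1$, so $|\nabla\phi(x_0)|\leq 1$; if $\partial_\eta\phi(x_0)\leq 0$ the required inequality $\min\{|\nabla\phi|-1,\partial_\eta\phi\}(x_0)\leq 0$ is automatic. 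The supersolution case is the mirror image, with a $\max$ appearing in the finite-$p$ inequality. The case $g(x_0)<0$ is handled analogously with opposite signs, and the case $g(x_0)=0$ collapses the Neumann term to the pure sign condition on $\partial_\eta\phi(x_0)$, matching $H(x_0,\nabla\phi)=\partial_\eta\phi(x_0)$.

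The principal obstacle is the boundary analysis: one must justify (i) the viscosity formulation of the Neumann problem for $u_p$ with the correct $\min$/$\max$ alternative; (ii) that the touching points $x_p$ eventually sit on the active portion of $\partial\Omega$, which uses $x_0\in\partial(\{u_\infty>0\}\cup\{u_\infty<0\})$ together with uniform convergence; and (iii) the $(p-2)$-th root normalization, whose bases must be kept bounded and bounded away from zero, forcing a case split by the sign of $\partial_\eta\phi(x_0)$ and of $g(x_0)$. The case $g(x_0)=0$ is the most delicate because the natural normalization degenerates. Modulo these technicalities, the argument closely parallels the Neumann $\infty$-Laplace analysis of~\cite{G-AMPR}.
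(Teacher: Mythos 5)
Your proposal follows essentially the same route as the paper: uniform convergence of $u_p$ plus the standard touching-point argument, the weak-to-viscosity formulation of the $p$-Neumann problem (which the paper proves directly in Lemma~\ref{equisols} rather than quoting it), the extraction of $(p-2)$-th roots in the inequality $|\nabla\phi(x_p)|^{p-2}\partial_\eta\phi(x_p)\gtrless g(x_p)$, and the case split on the signs of $g(x_0)$ and $\partial_\eta\phi(x_0)$. The only part you compress more than the paper does is the verification at free-boundary points (both in $\Omega$ and on $\partial\Omega$), where the paper tests against the upper and lower semicontinuous envelopes $\max\{\phi(x_0),-\Delta_\infty\phi(x_0)\}$ and $\min\{\phi(x_0),-\Delta_\infty\phi(x_0)\}$ of the discontinuous limit operator instead of dismissing the zero set by continuity alone.
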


    In contrast with the limit optimal design problems with Dirichlet boundary condition studied previously in \cite{daSR,RT}, see also \cite{RW}, this Neumann counterpart does not have a point-wise boundary condition. Indeed, the limiting boundary condition depends on the sign of $g$ and must be understood in a more general/appropriated sense in the framework of viscosity solutions theory (see Definition \ref{DefViscSol}), thus losing its variational character when compared to original problem \eqref{eqMin_ps}.

\subsection*{Monge-Kantorovich type problems}

    Let us recall that optimal transport theory is a longstanding research subject that nowadays still attracts growing attention due to its wide variety of emerging applications (cf. \cite{Amb,ACBBV,Ev99,EvGang,G-AMPR,G-AMPR3,IMRT,MRT3,Prat,Vil03,Vil09} and references therein). Historically, these studies began with Gaspard Monge's classical works and were ``rediscovered'' by Kantorovich in the context of economics (matching problems). They also constitute important topics within the context of probability (the Wasserstein metric), analysis (functional inequalities), geometry (Monge-Amp\`{e}re type equations) and PDEs (rates of decay for nonlinear evolution equations) just to name a few.

    Now, we will briefly present some well-known results related to the Monge-Kantorovich
    mass transport theory which will be used throughout the article (cf. \cite{Amb,ACBBV,Ev99,EvGang,Vil03} and \cite{Vil09} for some surveys).
    Let $\mu \in \mathcal{M}(\mathrm{X})$ and $\nu \in \mathcal{M}(\mathrm{Y})$ be Radon measures. We say that $T_{\sharp}\mu = \nu$, i.e., $T: \mathrm{X} \to \mathrm{Y}$ transports $\mu$ onto $\nu$ if
    $$
    \nu(\mathrm{B}) = \mu \left(T^{-1}(\mathrm{B})\right)
    $$
    for every Borel set $\mathrm{B} \subset \mathrm{Y}$.
    We also say that such a map $T$ is a measure-preserving map with respect to $(\mu, \nu)$ or that $T$ pushes $\mu$ forward to $\nu$. Finally, we define the following class
    $$
    T(\mu, \nu)\defeq \left\{T: \mathrm{X} \to \mathrm{Y}\colon T_{\sharp}\mu = \nu\right\}.
    $$

    Let us recall that the \textit{Monge problem}, associated with the measures $\mu$ and $\nu$, consist of finding a map $T^{\ast} \in T(\mu, \nu)$ which minimizes the functional (transportation cost)
\begin{equation}\label{EqOptTrans}
  \displaystyle \inf_{T(\mu, \nu)} \int |x-T(x)|d\mu(x) \qquad \left(\inf_{T(\mu, \nu)} \int c(x, T(x))d\mu(x)\right).
\end{equation}
    Notice that if $\mu$ and $\nu$ are absolutely continuous with respect to the Lebesgue measure, $\mu = f_1\Leb \llcorner \mathrm{X}$ and $\nu = f_2\Leb \llcorner \mathrm{Y}$, then there exists such an optimal map $T: \mathrm{X} \to \mathrm{Y}$. A map $T^{\ast} \in T(\mu, \nu)$ fulfilling \eqref{EqOptTrans} is denoted an \textit{optimal transport map} of $\mu$ to $\nu$.

    The Monge problem is, in general, ill-posed. To overcome such an obstacle, in the early forties, Kantorovich in \cite{Kant} proposed a relaxed version of the Monge problem, as well as introduced a dual variational formulation: Let $\pi_t(x, y) \defeq (1-t)x + ty$ and $\gamma \in \mathcal{M}(\mathrm{X}, \mathrm{Y})$ be a Radon measure. The projections $\text{proj}_x(\gamma) \defeq {\pi_0}_{\sharp} \gamma$ and $\text{proj}_y(\gamma) \defeq {\pi_1}_{\sharp} \gamma$ are denoted marginals of $\gamma$. Under these concepts, the \textit{Monge-Kantorovich problem} (cf. \cite{Kant} and \cite{Prat}), consists of considering the following minimization problem:
\begin{equation}\label{EqMonKant}
  \displaystyle \min \left\{\int_{\mathrm{X} \times \mathrm{Y}} |x-y|d\gamma(x, y): \gamma \in \Pi(\mu, \nu)\right\},
\end{equation}
    where
    $$
    \Pi(\mu, \nu) \defeq \left\{\gamma \in \mathcal{M}(\mathrm{X}, \mathrm{Y}): \,\, \text{proj}_x(\gamma) \defeq {\pi_0}_{\sharp} \gamma\,\,\, \text{and}\,\,\, \text{proj}_y(\gamma) \defeq {\pi_1}_{\sharp} \gamma\right\}.
    $$
    The elements in $\Pi(\mu, \nu)$ are denoted \textit{transport plans} between $\mu$ and $\nu$, and a minimizer to \eqref{EqMonKant} an \textit{optimal transport plan}. It is worth stress that a minimizer to \eqref{EqMonKant} always exists.

    Another important peace of information is that the Monge-Kantorovich problem admits the following dual formulation, known as the {\it Kantorovich-Rubinstein theorem}, \cite[Theorem 1.14]{Vil03} in the literature: The following duality holds true
\begin{equation}\label{EqDualMonKant}
  \displaystyle \min \left\{\int_{\mathrm{X} \times \mathrm{Y}} |x-y|d\gamma(x, y): \gamma \in \Pi(\mu, \nu)\right\} = \max\left\{\int_{\mathrm{X}} ud(\mu-\nu):\,\,\, u \in 1-\text{Lip}(\overline{\mathrm{X}})\right\},
\end{equation}
    where
    $
    \displaystyle 1-\text{Lip}(\overline{\mathrm{X}}) \defeq \Big\{u: \overline{\mathrm{X}} \to \R: \,\,\,   \sup_{x, y \in \overline{\mathrm{X}}, \, {x\neq y}} \frac{|u(x)-u(y)|}{|x-y|}\leq 1  \Big\}$.
    Maximizers of \eqref{EqDualMonKant} are called \textit{Kantorovich potentials}.

    Regarding the $\infty-$Neumann problem, the limit maximization problem \eqref{EqLMP} is also obtained by considering a dual formulation of the well-known \textit{Monge-Kantorovich mass transfer problem} for the measures $\mu = g^{+}\H \llcorner \partial \Omega$  and $\nu = g^{-}\H \llcorner \partial \Omega$ supported on $\partial \Omega$, where such measures must fulfil the mass transfer compatibility condition $\mu(\partial \Omega) = \nu(\partial \Omega)$ (cf. \cite{Amb} and compare with \cite[Theorem 1.1]{G-AMPR}).

    Our next result enables us to find a Kantorovich potential for the optimal mass transport problem via uniform convergence of a subsequence of the family of solutions to \eqref{eqMin_ps}.

\begin{theorem}\label{teo.3.intro} Let $g\geq 0$. There exists a non-negative measure $\nu = \nu_\infty$ such that a uniform limit of solutions of \eqref{eqMin_ps}, i.e., $\displaystyle u_{\infty}(x) = \lim_{p \to \infty} u_p(x)$, is a Kantorovich potential for the optimal mass transport problem between $\mu = g\H \llcorner \partial \Omega$ and $\nu_\infty$ (supported on the limiting free boundary).
\end{theorem}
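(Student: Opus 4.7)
The plan is to construct the measure $\nu_\infty$ as the weak-$*$ limit of natural Lagrange-multiplier measures supported on the interior free boundaries $\partial\{u_p>0\}\cap\Omega$, and then verify that $u_\infty$ attains the supremum in the Kantorovich--Rubinstein dual for the $W_1$ transport between $\mu$ and $\nu_\infty$.

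First, because $g\geq 0$ we may assume $u_p\geq 0$ (the positive part $u_p^+$ is admissible and does not increase the energy). Combining Theorem \ref{ThmExistMin.intro} with the Bernoulli-type free boundary condition produced by the volume constraint, $u_p$ satisfies
\[
    \int_\Omega |\nabla u_p|^{p-2}\nabla u_p\cdot\nabla\varphi\,dx
    = \int_{\partial\Omega\cap\partial\{u_p>0\}} g\varphi\,d\H
      -\lambda_p\int_{\partial\{u_p>0\}\cap\Omega}\varphi\,d\H,
    \qquad \varphi\in C^1(\overline\Omega),
\]
for a Lagrange multiplier $\lambda_p\geq 0$ with $|\nabla u_p|^{p-1}=\lambda_p$ on the interior free boundary. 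Testing with $\varphi=u_p$ yields the energy identity $\int_\Omega|\nabla u_p|^p\,dx=\int_{\partial\Omega}gu_p\,d\H\leq\|g\|_{L^1(\partial\Omega)}\|u_p\|_\infty\leq C$ (the uniform $L^\infty$ bound coming from Theorem \ref{teo.1.intro}); H\"older then gives $\int_\Omega|\nabla u_p|^{p-1}dx\leq|\Omega|^{1/p}(\int|\nabla u_p|^p)^{(p-1)/p}\leq C$, and testing $\varphi\equiv 1$ gives the mass identity $\lambda_p\H(\partial\{u_p>0\}\cap\Omega)=\int_{\partial\Omega\cap\partial\{u_p>0\}} g\,d\H$. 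Hence both the vector Radon measure $\vec\nu_p:=|\nabla u_p|^{p-2}\nabla u_p\,\Leb$ and the non-negative scalar measure $\nu_p:=\lambda_p\H\llcorner(\partial\{u_p>0\}\cap\Omega)$ have uniformly bounded total variation on $\overline\Omega$.

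Extracting weak-$*$ subsequential limits $\vec\nu_p\rightharpoonup^{\ast}\vec\nu_\infty$ and $\nu_p\rightharpoonup^{\ast}\nu_\infty\geq 0$ and passing to the limit in the Euler--Lagrange identity produces, for every $\varphi\in C^1(\overline\Omega)$,
\[
    \int_\Omega\nabla\varphi\cdot d\vec\nu_\infty
    = \int_{\partial\Omega} g\varphi\,d\H - \int_{\overline\Omega}\varphi\,d\nu_\infty, \qquad (\star)
\]
where a standard minimality perturbation (pushing a small positive patch into any neighborhood of a point of $\{g>0\}$ on $\partial\Omega$ and trimming the volume elsewhere) ensures $\{g>0\}\cap\partial\Omega\subset\overline{\{u_p>0\}}$, so no mass of $g$ is lost on the non-contact set and the limit boundary integral is exactly $\int_{\partial\Omega} g\varphi\,d\H$. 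Choosing $\varphi\equiv 1$ yields the mass-matching $\nu_\infty(\overline\Omega)=\mu(\partial\Omega)$. Since $u_p\equiv 0$ on $\partial\{u_p>0\}\cap\Omega$, one has $\int u_p\,d\nu_p=0$; uniform convergence $u_p\to u_\infty$ together with $\nu_p\rightharpoonup^{\ast}\nu_\infty$ forces $\int u_\infty\,d\nu_\infty=0$, and because $u_\infty,\nu_\infty\geq 0$ this yields $\supp\nu_\infty\subset\{u_\infty=0\}$; thus $\nu_\infty$ is concentrated on the limiting free boundary $\partial\{u_\infty>0\}$.

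To close the argument, for every $\varphi\in C^1(\overline\Omega)$ with $\|\nabla\varphi\|_{L^\infty(\Omega)}\leq 1$, H\"older combined with the energy identity gives
\[
    \int_\Omega\nabla\varphi\cdot d\vec\nu_p
    \leq \int_\Omega|\nabla u_p|^{p-1}\,dx
    \leq |\Omega|^{1/p}\Big(\int_{\partial\Omega}gu_p\,d\H\Big)^{(p-1)/p}
    \xrightarrow[p\to\infty]{} \int_{\partial\Omega} gu_\infty\,d\H,
\]
by the uniform convergence $u_p\to u_\infty$. Combined with $(\star)$, $\int u_\infty\,d\nu_\infty=0$, and the density of $C^1$ test functions in the 1-Lipschitz class on $\overline\Omega$ (via Kirszbraun extension and mollification), we obtain for every $\varphi\in W^{1,\infty}(\Omega)$ with $\|\nabla\varphi\|_{L^\infty}\leq 1$,
\[
    \int\varphi\,d\mu-\int\varphi\,d\nu_\infty
    =\int_\Omega\nabla\varphi\cdot d\vec\nu_\infty
    \leq \int u_\infty\,d\mu
    = \int u_\infty\,d\mu-\int u_\infty\,d\nu_\infty.
\]
Since $u_\infty$ is itself 1-Lipschitz by Theorem \ref{teo.1.intro}, it attains the supremum in the Kantorovich--Rubinstein duality \eqref{EqDualMonKant} applied to the pair $(\mu,\nu_\infty)$, and therefore $u_\infty$ is a Kantorovich potential. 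The principal obstacle will be the rigorous justification of the Euler--Lagrange identity with its Bernoulli free-boundary contribution and the accompanying mass-matching $\nu_\infty(\overline\Omega)=\mu(\partial\Omega)$: specifically, excluding via minimality that $\{g>0\}\cap\partial\Omega$ contains appreciable pieces away from $\overline{\{u_p>0\}}$, which is the key input allowing the limit passage in $(\star)$ to have $\mu$ (rather than a strictly smaller boundary measure) as the source marginal.
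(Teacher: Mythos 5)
Your overall strategy coincides with the paper's: build $\nu_\infty$ as a weak-$*$ limit of the flux measures $|\nabla u_p|^{p-2}\partial_\eta u_p$ concentrated on the free boundary, obtain the mass balance from the divergence theorem (using $\Delta_p u_p=0$ in the positivity set), show $\supp \nu_\infty\subset\{u_\infty=0\}$, and verify the Kantorovich--Rubinstein duality. Your closing step --- the H\"older estimate $\int\nabla\varphi\cdot d\vec\nu_p\le\int_\Omega|\nabla u_p|^{p-1}dx\le|\Omega|^{1/p}\bigl(\int_{\partial\Omega}gu_p\,d\H\bigr)^{(p-1)/p}\to\int_{\partial\Omega}gu_\infty\,d\H$ for $1$-Lipschitz test functions --- is in fact more explicit than the paper's, which at this point only invokes the non-crossing of transport rays and ``previous results''.

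Two points, however, do not survive scrutiny. First, you derive $\nu_p$ from a Bernoulli-type condition $|\nabla u_p|^{p-1}=\lambda_p$ on $\partial\{u_p>0\}\cap\Omega$; no such free-boundary condition (nor any regularity of $\partial\{u_p>0\}$ making $\H\llcorner(\partial\{u_p>0\}\cap\Omega)$ a finite measure) is established anywhere in the paper, and proving it is a serious free-boundary-regularity problem in its own right. The paper sidesteps this by working with the level sets $\{u_p>\epsilon_j\}$, which have finite perimeter for a.e.\ $\epsilon_j$ and on which the normal flux of the $p$-harmonic function $u_p$ is a well-defined non-negative measure with mass $\int_{\partial\Omega\cap\{u_p>\epsilon_j\}}g\,d\H$; since your subsequent argument uses only the flux measure and its total mass, not the constancy of $\lambda_p$, this substitution repairs the step. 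Second, and more seriously, the asserted inclusion $\{g>0\}\cap\partial\Omega\subset\overline{\{u_p>0\}}$ is false: the paper's own Example~\ref{rem.33} has $g(1)>0$ yet $u_p(1)=0$ and zero flux at $x=1$, so the source marginal actually reaching the free boundary is $g\H\llcorner(\partial\Omega\cap\partial\{u_p>0\})$, which can be strictly smaller than $\mu$, and the mass matching $\nu_\infty(\overline\Omega)=\mu(\partial\Omega)$ fails; the proposed ``push a small positive patch'' perturbation cannot close this, because (as the example shows) minimality may prefer to spend the whole volume budget near the points where $g$ is largest. You correctly flag this as the principal obstacle, but it is a genuine gap in your argument (one that, to be fair, the paper's own proof also glosses over in passing from $\int_{\partial\Omega\cap\{u_p>\epsilon_j\}}g\,d\H$ to $\int_{\partial\Omega}g\,d\H$).
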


Finally,  this limit gives the maximum possible transport cost
    between $\mu = g\H \llcorner \partial \Omega$ and any nonnegative measure $\nu$
    with transport set of measure less or equal than $\alpha$.
    Notice that the infimum of such costs is zero (just consider $\nu_n$ a sequence
    of measures converging to $g\H \llcorner \partial \Omega$ with supports converging to $\partial \Omega$).

\begin{theorem}\label{teo.5.intro} Suppose that the assumptions of Theorem \ref{teo.3.intro} are in force. Then,
$$
   \displaystyle  \int_{\partial \Omega} u_\infty g d\H = \max_{\nu \in \mathcal{M}(\Omega), \,\omega \in 1-\text{Lip}(\overline{\Omega}), \atop{\Leb (\mathrm{T}(\omega))\leq \alpha}} \left\{\int_{\overline{\Omega}} \omega d(\mu-\nu) \right\}.
$$
\end{theorem}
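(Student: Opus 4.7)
The plan is to establish the equality by verifying both inequalities, exploiting the characterizations of $u_\infty$ as an extremal of $\mathfrak{P}_\infty[\alpha]$ (Theorem~\ref{teo.1.intro}) and as a Kantorovich potential (Theorem~\ref{teo.3.intro}).

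For the inequality $\int_{\partial\Omega} u_\infty g \, d\H \leq \mathrm{RHS}$, I would exhibit the admissible pair $(\omega,\nu)=(u_\infty,\nu_\infty)$, with $\nu_\infty$ furnished by Theorem~\ref{teo.3.intro}. The passage to the limit in Theorem~\ref{teo.1.intro} yields $\|\nabla u_\infty\|_{L^\infty(\Omega)}\leq 1$, so $u_\infty \in 1\text{-}\mathrm{Lip}(\overline{\Omega})$, while the volume constraint $\Leb(\{u_p>0\})=\alpha$ transfers to $\Leb(\{u_\infty>0\})\leq \alpha$, hence $\Leb(\mathrm{T}(u_\infty))\leq\alpha$. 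Because $u_\infty$ is a Kantorovich potential for the transport between $\mu$ and $\nu_\infty$, complementary slackness forces $u_\infty\equiv 0$ on $\operatorname{supp}(\nu_\infty)\subset \partial\{u_\infty\ne 0\}$, so $\int_\Omega u_\infty \, d\nu_\infty=0$ and consequently $\int_{\overline{\Omega}} u_\infty\,d(\mu-\nu_\infty)=\int_{\partial\Omega} u_\infty g\,d\H$.

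For the reverse inequality I would first observe that the sign condition $g\geq 0$ forces $u_p\geq 0$ (via the standard truncation: $u_p^+$ is admissible and has strictly smaller energy unless $u_p\geq 0$), hence $u_\infty\geq 0$. Given an admissible pair $(\omega,\nu)$, note that the maximization can only be finite when $\omega\geq 0$ $\mu$-a.e. and on $\operatorname{supp}(\nu)$: otherwise concentrating $\nu\geq 0$ on a negative set of $\omega$ would drive $-\int \omega\,d\nu$ to $+\infty$. Hence, without loss of generality, one may assume $\omega\geq 0$ by replacing $\omega$ with $\omega^+$ (which is still 1-Lipschitz and satisfies $\{\omega^+>0\}=\{\omega>0\}$), and simultaneously replacing $\nu$ by its restriction to $\{\omega\geq 0\}$. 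Then $\int\omega\,d\nu\geq 0$, so
\begin{equation*}
\int_{\overline{\Omega}} \omega\,d(\mu-\nu) \;\leq\; \int_{\overline{\Omega}} \omega\,d\mu \;=\; \int_{\partial\Omega} \omega g\,d\H \;\leq\; \int_{\partial\Omega} u_\infty g\,d\H,
\end{equation*}
where the last inequality uses that $\omega$ is an admissible competitor for $\mathfrak{P}_\infty[\alpha]$ together with the extremality of $u_\infty$ from Theorem~\ref{teo.1.intro}.

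The main obstacle I anticipate lies in pinning down the transport set $\mathrm{T}(\omega)$ for a general competitor and relating $\Leb(\mathrm{T}(\omega))\leq\alpha$ to the volume constraint $\Leb(\{\omega>0\})\leq\alpha$ used in $\mathfrak{P}_\infty[\alpha]$; this identification is transparent for $\omega=u_\infty$ since Theorem~\ref{teo.2.intro} describes the free-boundary geometry, but in the general case it requires invoking the Evans--Gangbo structure of transport rays and showing that any Kantorovich-type competitor can be re-normalized so that its support of positivity contains the transport set. A secondary technical point is the rigorous justification that $u_\infty$ vanishes on $\operatorname{supp}(\nu_\infty)$; this follows from the duality identity $\int u_\infty\, d(\mu-\nu_\infty)=W_1(\mu,\nu_\infty)$ combined with the 1-Lipschitz upper bound on $u_\infty$ and the location of $\operatorname{supp}(\nu_\infty)$ on the limiting free boundary provided by Theorem~\ref{teo.3.intro}.
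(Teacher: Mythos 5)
Your proposal follows essentially the same route as the paper: one inequality comes from exhibiting $(u_\infty,\nu_\infty)$ as an admissible competitor, using that $u_\infty$ vanishes on $\mathrm{supp}(\nu_\infty)\subset\partial\{u_\infty>0\}\cap\Omega$ so that $\int_{\overline{\Omega}}u_\infty\,d(\mu-\nu_\infty)=\int_{\partial\Omega}u_\infty g\,d\H$, and the reverse one from normalizing $\omega$ to be nonnegative, discarding the term $-\int\omega\,d\nu\le 0$, and invoking the extremality of $u_\infty$ for \eqref{eqMax_infty}. The only real difference is the normalization step --- the paper adds a constant to $\omega$ (using that $\mu$ and $\nu$ carry equal total mass) so that $\inf_{\mathrm{T}(\omega)}\omega=0$, whereas you truncate to $\omega^+$ after a finiteness argument --- and the issue you flag about passing from $\Leb(\mathrm{T}(\omega))\le\alpha$ to the constraint $\Leb(\{\omega>0\})\le\alpha$ is likewise left implicit in the paper's argument.
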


\medskip

    Our manuscript is organized as follows: in Section \ref{Prelim} we collect some preliminary results
    that will be used throughout the article and analyze the problem for a finite (fixed) $p$. In Section \ref{Sec3} we show how to pass to the limit as $p\to \infty$. Section \ref{Sec4} is devoted to explain how our limiting free boundary optimization problem links with the Monge-Kantorovich mass transfer problem. Finally, in Section \ref{Sec5} we include some examples in which limit solutions can be computed explicitly.


\section{Analysis for finite $p$} \label{Prelim}

    Throughout this manuscript $\Omega \subset \R^N$ will denote an open and bounded
    domain with Lipschitz boundary with a unitary outward normal vector field
    $\eta\colon \partial \Omega\to \mathbb{S}^{N-1}$ that is defined for $\H-$almost
    every point of $\partial \Omega$, where $\H$ states the standard
    $(N-1)-$dimensional Hausdorff measure.

    Now we specify the different notions of solutions which we will use throughout this
    article. For a fixed value of $N<p< \infty$ we consider weak solutions.
    On the other hand, in the limiting setting, as $p \to \infty$,
    we will use the concept of viscosity solutions.

    \begin{definition}[{ Weak solution}]
        Let $p>N.$ A $u \in W^{1, p}(\Omega)$ is said a weak solution to
        \eqref{eqp} if there holds
        \[
            \displaystyle
            \int_{\Omega \setminus \{u=0\}} |\nabla u|^{p-2}\nabla u \cdot \nabla \phi dx
            = \int_{\partial \Omega} g \phi\, d\H
        \]
        for every $\phi \in W^{1, p}(\Omega \setminus \{u=0\})$
        with $\phi \equiv 0$ in $ \{u=0\}$.
    \end{definition}

    Now, our aim is to show that there is
    a minimizer of the functional
    \[
       \mathcal{J}_p[v]
       \defeq \frac{1}{p}\int_{\Omega}
       |\nabla v|^pdx- \int_{\partial \Omega} gv \, d\H
    \]
    over
    \[
        \mathbb{K}^p_{\alpha} \defeq \left\{v \in W^{1, p}\left(\Omega\right)\colon
            \Leb(\{v>0\}) \leq \alpha\right\}.
    \]
    Note that, following \cite{G-AMPR}, we can show that any minimizer
    of $\mathcal{J}_p[\cdot]$ over $\mathbb{K}_{\alpha}^p$ is a weak
    solution to \eqref{eqp}.

     Let us recall an important inequality.

     \begin{theorem}[Morrey's inequality]\label{MorIneq}
        Let $N<p\leq \infty$ and $\Omega \subset \R^n$ be a regular domain. Then for all $u \in W^{1, p}(\Omega)$ such that $\{u=0\}\neq\emptyset$, there exists a constant $C(N, p, \Omega)>0$ such
        that
       \[
            \|u\|_{C^{0, 1-\frac{N}{p}}(\Omega)}
                \leq C(N, p, \Omega)\|\nabla u\|_{L^{p}(\Omega)},
       \]
       where the constant $C(N, p, \Omega)>0$ can be assumed uniform
       in $p$.
    \end{theorem}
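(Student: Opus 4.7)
The plan is to derive this from the classical Morrey embedding by exploiting the vanishing hypothesis to absorb the $L^p$ norm of $u$ itself into the gradient term, and then to track the $p$-dependence of the constants to get uniformity.

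First I would recall the standard Morrey inequality on the whole space: for $u \in W^{1,p}(\R^N)$ with $p>N$, the Hölder seminorm satisfies
\[
[u]_{C^{0,1-N/p}(\R^N)} \le C_0(N,p)\,\|\nabla u\|_{L^p(\R^N)},
\]
with $C_0(N,p)$ bounded as $p\to\infty$ (the classical derivation via representation of $u$ at Lebesgue points through the Riesz potential of $\nabla u$ gives a constant of the form $c_N\cdot \tfrac{p-1}{p-N}^{1-1/p}$ or similar, which stays bounded for $p$ large). Since $\Omega$ is a Lipschitz domain, there is an extension operator $E\colon W^{1,p}(\Omega)\to W^{1,p}(\R^N)$ with $\|\nabla Eu\|_{L^p(\R^N)}\le C_1(N,\Omega)\|u\|_{W^{1,p}(\Omega)}$, and the extension constant can be taken independent of $p$ for $p$ in any fixed range $[p_0,\infty]$ (one can use a reflection/Stein-type extension). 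Applied to $u$, this yields a seminorm bound on $\Omega$ of the form $[u]_{C^{0,1-N/p}(\Omega)} \le C(N,p,\Omega)\|u\|_{W^{1,p}(\Omega)}$; this is the standard embedding.

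The main issue, and the reason the hypothesis $\{u=0\}\neq\emptyset$ is essential, is that the $L^p$ norm of $u$ must be eliminated. Here I would use the vanishing point $x_0 \in \Omega$ with $u(x_0)=0$ (in the continuous representative, which exists by the embedding): for any $x\in\Omega$,
\[
|u(x)| = |u(x)-u(x_0)| \le [u]_{C^{0,1-N/p}(\Omega)}\,|x-x_0|^{1-N/p} \le [u]_{C^{0,1-N/p}(\Omega)}\,\mathrm{diam}(\Omega)^{1-N/p}.
\]
This gives $\|u\|_{L^\infty(\Omega)}\le \mathrm{diam}(\Omega)^{1-N/p}\,[u]_{C^{0,1-N/p}(\Omega)}$. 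Combining with the Morrey seminorm bound but now applied directly in a gradient-only form (by working first with $u-u(x_0)$ on the extended function, or equivalently by applying the Poincaré-type variant of Morrey on balls containing $\Omega$), one gets
\[
[u]_{C^{0,1-N/p}(\Omega)} \le C(N,p,\Omega)\,\|\nabla u\|_{L^p(\Omega)},
\]
and adding these two bounds produces the claimed full $C^{0,1-N/p}$-norm estimate with $\|\nabla u\|_{L^p(\Omega)}$ alone on the right-hand side.

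The hardest part is the uniformity of $C(N,p,\Omega)$ in $p$, and this is where I would be most careful. The factor $\mathrm{diam}(\Omega)^{1-N/p}$ is controlled by $\max\{1,\mathrm{diam}(\Omega)\}$, so it is bounded in $p$. The classical Morrey constant $C_0(N,p)$ can be shown to be bounded for $p\ge 2N$, say, by examining the explicit expression arising from the pointwise bound $|u(x)-u(y)|\le c_N \int_{B}|\nabla u(z)||z-x|^{1-N}dz$ plus Hölder in $L^p$; the factor $(p-N)^{-1/p}$ that appears is bounded as $p\to\infty$. Combined with the $p$-independent extension constant, this yields a single constant $C=C(N,\Omega)$ valid for all $p\ge p_0>N$, which is the required uniformity and is precisely what enables the passage to the limit $p\to\infty$ later in the paper.
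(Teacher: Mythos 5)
The paper itself offers no proof of this statement: it is introduced with ``Let us recall an important inequality'' and invoked as a classical fact, so there is no argument of the authors' to compare yours against. Your outline is the standard route and is essentially correct. The Morrey seminorm estimate is intrinsically a gradient-only estimate: the pointwise representation $|u(x)-u(y)|\le c_N\int |\nabla u(z)|\,|z-x|^{1-N}dz + c_N\int |\nabla u(z)|\,|z-y|^{1-N}dz$ over a suitable ball, followed by H\"older's inequality, produces exactly the factor $\left(\tfrac{p-1}{p-N}\right)^{1-1/p}$ you identify, which is bounded for $p\ge 2N$, say; the vanishing point then upgrades the seminorm bound to a sup-norm bound via $\|u\|_{L^\infty(\Omega)}\le \mathrm{diam}(\Omega)^{1-N/p}[u]_{C^{0,1-N/p}(\Omega)}$; and the remaining geometric constants are $p$-independent. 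This uniformity is precisely what the paper needs in the proof of Theorem \ref{ThmExistMin} and Lemma \ref{LemmaWUniEst}.

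The one step you should tighten is the passage from $\R^N$ to $\Omega$ in the seminorm estimate. The extension operator only gives $\|Eu\|_{W^{1,p}(\R^N)}\le C\|u\|_{W^{1,p}(\Omega)}$, which reintroduces $\|u\|_{L^p(\Omega)}$ on the right; and the device of ``working with $u-u(x_0)$'' is vacuous here, since $u(x_0)=0$ means $u-u(x_0)=u$ and nothing has been subtracted. Note also that feeding $\|u\|_{L^p(\Omega)}\le |\Omega|^{1/p}\mathrm{diam}(\Omega)^{1-N/p}[u]_{C^{0,1-N/p}(\Omega)}$ back into that estimate and trying to absorb the seminorm into the left-hand side fails unless the resulting coefficient happens to be less than one, which you cannot guarantee. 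The correct repairs are the two you gesture at: either run the Campanato/cone-condition version of Morrey directly on the regular domain (for $x,y\in\Omega$, average over a ball of radius comparable to $|x-y|$ contained in $\Omega\cap B_{|x-y|}(x)\cap B_{|x-y|}(y)$, which exists by the interior cone condition), or first subtract the mean value $u_\Omega$ of $u$ over $\Omega$, apply a Poincar\'e--Wirtinger inequality whose constant is uniform in $p$ (valid on bounded Lipschitz domains, e.g.\ via the Riesz-potential representation $|u(x)-u_\Omega|\le C(\Omega)\int_\Omega |\nabla u(y)|\,|x-y|^{1-N}dy$ and Young's inequality), and only then extend. Either of these makes the seminorm bound genuinely gradient-only, after which your argument closes; the case $p=\infty$ is the trivial Lipschitz statement.
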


    We now prove existence of minimizers for our minimization problem.
    Taking into account that we are interested in the asymptotic limit as
    $p \to \infty$, we will assume that $p>N$.

    \begin{theorem}[{Existence of minimizers}]\label{ThmExistMin}
        Let $p>N,$ $g\in L^1(\partial\Omega)$ be such that
        \[
            \int_{\partial \Omega} g \,d\H >0
        \]
        and $0< \alpha < \Leb(\Omega)$, fixed.
        Then there is at least one function $u_p\in\mathbb{K}_\alpha^p$
        solving
        \[
            \mathcal{J}_p[u_p]=\min\left\{\mathcal{J}_p[v]\colon
            v\in\mathbb{K}_\alpha^p \right\}.
        \]
        In addition, if $u$ is minimizer
            of $\mathcal{J}_p[\cdot]$ over $\mathbb{K}_{\alpha}^p$  then
        \[
            \Leb(\{u>0\})=\alpha.
        \]
    \end{theorem}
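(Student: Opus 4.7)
The proof proceeds by the direct method of calculus of variations. The delicate point is coercivity of $\mathcal{J}_p$ on $\mathbb{K}_\alpha^p$, because the constraint $\Leb(\{v>0\})\le\alpha$ controls only the positive part of a competitor, and the gradient penalization is insensitive to constants. Once coercivity is in place, extraction of a limit and verification of admissibility are standard thanks to the compact Sobolev embedding available for $p>N$.

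For coercivity I would split $v=v^{+}-v^{-}$; since $\nabla v^{+}$ and $\nabla v^{-}$ have disjoint supports a.e., the functional decomposes cleanly as
\[
\mathcal{J}_p[v]=\Bigl(\tfrac{1}{p}\!\int_\Omega\!|\nabla v^{+}|^p\,dx-\!\int_{\partial\Omega}\!g\,v^{+}\,d\H\Bigr)+\Bigl(\tfrac{1}{p}\!\int_\Omega\!|\nabla v^{-}|^p\,dx+\!\int_{\partial\Omega}\!g\,v^{-}\,d\H\Bigr).
\]
For the first bracket, the constraint forces $\Leb(\{v^{+}=0\})\ge \Leb(\Omega)-\alpha>0$, so Morrey's inequality (Theorem~\ref{MorIneq}) applied to $v^{+}$ gives $\|v^{+}\|_{L^\infty(\Omega)}\le C\|\nabla v\|_{L^p(\Omega)}$, whence $|\!\int_{\partial\Omega}g\,v^{+}\,d\H|\le C\|g\|_{L^1(\partial\Omega)}\|\nabla v\|_{L^p(\Omega)}$, making the summand coercive in $\|\nabla v^{+}\|_{L^p}$. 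For the second bracket I would decompose $v^{-}=\overline{v^{-}}+w$ with $\overline{v^{-}}\ge 0$ the mean and $w$ mean-zero, apply Poincar\'e--Wirtinger so that $\|w\|_{W^{1,p}}\le C\|\nabla v^{-}\|_{L^p}$, and observe $\int_{\partial\Omega}g\,v^{-}\,d\H=\overline{v^{-}}\!\int_{\partial\Omega}g\,d\H+O(\|\nabla v^{-}\|_{L^p})$. The sign assumption $\int_{\partial\Omega}g\,d\H>0$ combined with $\overline{v^{-}}\ge 0$ makes the leading term nonnegative and, crucially, coercive in the mean; consequently $\overline{v^{-}}$, $\|\nabla v^{-}\|_{L^p}$ and $\|\nabla v^{+}\|_{L^p}$ are all bounded along any minimizing sequence, yielding $\|v_n\|_{W^{1,p}(\Omega)}\le C$.

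A weakly convergent subsequence $v_n\rightharpoonup u_p$ in $W^{1,p}(\Omega)$ then exists and, since $p>N$, converges uniformly on $\overline\Omega$ by compactness. Weak lower semicontinuity of $v\mapsto\int|\nabla v|^p$ and continuity of the boundary integral (via the trace plus uniform convergence) yield $\mathcal{J}_p[u_p]\le\liminf\mathcal{J}_p[v_n]$, while admissibility $\Leb(\{u_p>0\})\le\alpha$ follows because $\{u_p>0\}$ is open and $\mathbf{1}_{\{u_p>0\}}\le\liminf\mathbf{1}_{\{v_n>0\}}$ pointwise, so Fatou applies. For the saturation $\Leb(\{u_p>0\})=\alpha$ I would argue by contradiction: if $\alpha':=\Leb(\{u_p>0\})<\alpha$, the positivity $\int_{\partial\Omega}g\,d\H>0$ allows us to pick a small open $B\subset\overline\Omega$ meeting $\partial\Omega$ on a piece where $\int_{B\cap\partial\Omega}g\,d\H>0$, with $\Leb(B\cap\Omega)<\alpha-\alpha'$, together with a nonnegative $\phi\in C_c^\infty(\overline B)$ such that $\int_{\partial\Omega}g\phi\,d\H>0$. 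Then $u_p+\varepsilon\phi\in\mathbb{K}_\alpha^p$; using that $u_p$ solves the Euler--Lagrange equation off $\{u_p=0\}$ (and $\nabla u_p=0$ a.e.\ on $\{u_p=0\}$), a Taylor expansion produces $\mathcal{J}_p[u_p+\varepsilon\phi]-\mathcal{J}_p[u_p]=-\varepsilon\!\int_{\partial\Omega}g\phi\,d\H+o(\varepsilon)$, strictly negative for small $\varepsilon>0$, contradicting minimality. The hardest step throughout is coercivity, because without the sign hypothesis on $g$ and the $v=v^{+}-v^{-}$ splitting there is no mechanism preventing a minimizing sequence whose negative part blows up on the region where the gradient term cannot see it.
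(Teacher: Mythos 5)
Your coercivity argument is sound and is in fact a different (arguably cleaner) route than the paper's: the paper instead normalizes the minimizing sequence so that each $u_j$ vanishes at some point $x_j\in\overline\Omega$ (replacing $u_j$ by $u_j+\varepsilon_j$ when $u_j<0$ everywhere, which lowers the functional because $\int_{\partial\Omega}g\,d\H>0$) and then applies Morrey's inequality to the whole function; your splitting $v=v^+-v^-$, using the volume constraint to control $v^+$ and the sign of $\int_{\partial\Omega}g\,d\H$ together with $\overline{v^-}\ge0$ to control the negative part, achieves the same bound. The extraction of the limit, lower semicontinuity, and the Fatou argument for $\Leb(\{u_p>0\})\le\alpha$ are all fine.

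The saturation step $\Leb(\{u>0\})=\alpha$, however, has a genuine gap. Your perturbation $u_p+\varepsilon\phi$ with $\phi\ge0$ supported in a small set $\overline B$ touching $\{g>0\}$ is indeed admissible, but precisely because it is admissible for all small $\varepsilon>0$, minimality forces the one-sided first variation to be \emph{nonnegative}:
\[
\int_{\Omega}|\nabla u_p|^{p-2}\nabla u_p\cdot\nabla\phi\,dx-\int_{\partial\Omega}g\phi\,d\H\;\ge\;0 .
\]
Your claimed expansion $\mathcal{J}_p[u_p+\varepsilon\phi]-\mathcal{J}_p[u_p]=-\varepsilon\int_{\partial\Omega}g\phi\,d\H+o(\varepsilon)$ implicitly assumes that the gradient term $\int_{\Omega}|\nabla u_p|^{p-2}\nabla u_p\cdot\nabla\phi\,dx$ vanishes. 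The fact that $\nabla u_p=0$ a.e.\ on $\{u_p=0\}$ only kills the contribution from that set; on $B\cap\{u_p\neq0\}$ there is no reason for it to vanish, and in the typical situation where $u_p>0$ near a boundary piece with $g>0$ the Neumann condition makes the gradient term cancel $\int_{\partial\Omega}g\phi\,d\H$ exactly, so the difference is $O(\varepsilon^2)\ge0$ by convexity and no contradiction arises. A first-variation argument in an admissible direction can never contradict minimality. The paper's mechanism is different and of second order: assuming $\Leb(\{u>0\})=\alpha-\varepsilon$, it picks an \emph{interior} free boundary point $x_0\in\partial\{u>0\}\cap\Omega$ (whose existence uses $\inf\mathcal{J}_p<0$, hence $u\not\equiv0$, a point you would also need to establish), replaces $u$ in a small ball $B_r(x_0)$ with $\Leb(B_r)<\varepsilon$ by the $p$-harmonic function with the same boundary values, and observes that this strictly decreases the Dirichlet energy—because $u$ is $p$-harmonic on each phase but not across the free boundary—while leaving the boundary integral untouched and keeping the competitor in $\mathbb{K}^p_\alpha$. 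You would need to replace your perturbation argument by something of this kind.
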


    \begin{proof}
        First, we claim that
        \begin{equation}\label{Eq1ThmEx}
            \inf
            \left\{ \mathcal{J}_p[v]\colon v\in \mathbb{K}^p_{\alpha}
            \right\} <0.
        \end{equation}
        To see this, we take $a>0$ such that
        $\Leb(\{x\in\overline{\Omega}\colon \text{dist}(x,\partial\Omega)\le a\})=\alpha,$
        $\varepsilon>0,$ and $v\in W^{1,p}(\Omega)$ the weak solution of
        \[
            \begin{cases}
	            -\Delta_p u=0 &\text{in }
	            \qquad \Omega_a=\{x\in\Omega\colon \text{dist}(x,\partial\Omega)\le a\},\\
	            \qquad u=\varepsilon & \text{on }\qquad \partial \Omega,\\
	            \qquad u=0& \text{on } \qquad \{x\in\overline{\Omega}\colon \text{dist}(x,\partial\Omega)
	            = a\}.
	        \end{cases}
        \]
        Then
        \[
            \psi(x)\defeq
            \begin{cases}
                u(x) &\text{if } x\in\Omega_a,\\
                0 &\text{if } x\in\Omega\setminus\Omega_a,
            \end{cases}
        \]
        belongs to $ \mathbb{K}^p_{\alpha}$ and $\mathcal{J}_p[\psi]<0$ provided
        $\varepsilon$ is small enough. Thus \eqref{Eq1ThmEx} follows.

        \medskip

        Now, we consider a minimizing sequence for \eqref{eqMin_ps},
        i.e., $(u_j)_{j \in \mathbb{N}} \subset W^{1,p}(\Omega)$
        such that
        \[
             \Leb(\{u_j>0\}) \leq \alpha \quad
             \text{and} \quad  \mathcal{J}_p[u_j] \searrow
             \inf
            \left\{ \mathcal{J}_p[v]\colon v\in \mathbb{K}^p_{\alpha}
            \right\}.
        \]

        Next, we assert that we can assume that for each $j \in \mathbb{N}$ there exists
        at least one $x_j \in \overline{\Omega}$ such that $u_j(x_j) = 0.$
        To verify this claim, first note that $\{u_j>0\} \neq \overline{\Omega}$.
        On the other hand, if $\{u_j>0\} \neq \emptyset$ then
        $u_j$ must change sign and then there exists
        $x_j \in \overline{\Omega}$ such that $u_j(x_j) = 0$.
        Now, if $\{u_j<0\}  =  \overline{\Omega}$, then for each $j \in \mathbb{N}$
        we could select an $\varepsilon_j >0$ such that $
        \Leb(\{u_j+\varepsilon_j>0\}) \leq \alpha$ with
        $\{u_j+\varepsilon_j\geq 0\} \cap \overline{\Omega} \neq \emptyset$.
        From our assumption on $g$ we get
        $$
        \begin{array}{rcl}
          \inf \left\{ \mathcal{J}_p[v]\colon v\in \mathbb{K}^p_{\alpha}
            \right\}  & \leq & \mathcal{J}_p[u_j+\varepsilon_j]\\[7pt]
           & = &  \displaystyle \mathcal{J}_p[u_j] -
            \varepsilon_j \int_{\partial \Omega} g(x)d\H \\[7pt]
            & < & \mathcal{J}_p[u_j]
             \to  \inf \left\{ \mathcal{J}_p[v]\colon v\in \mathbb{K}^p_{\alpha}
            \right\},
        \end{array}
        $$
        and then we can just take $u_j + \epsilon_j$ as our minimizing sequence.
        Notice that there exists at least one point $x_j \in \overline{\Omega}$
        such that $u_j(x_j) +\epsilon_j = 0$. Hence, our claim is proved.

        \medskip

        In what follows, we will still call $u_j$ the minimizing sequence with
        $u_j(x_j) = 0$. Next, using Morrey's inequality, we get
        \begin{align*}
                \int_{\partial \Omega} gu_jd\H&
                \leq   \int_{\partial \Omega} |g(x)||u_j(x)-u_j(x_j)|\,d\H \\
                  & \leq  C(N, p, \Omega)
                  \|\nabla u_j\|_{L^{p}(\Omega)}
                  \int_{\partial \Omega} |g(x)| |x-x_j|^{1-\frac{N}{p}}\,d\H\\
                    & \leq   C(N, p, \Omega)
                    \|g\|_{L^1(\partial \Omega)}
                    \text{diam}(\Omega)^{1-\frac{N}{p}}
                    \|\nabla u_j\|_{L^{p}(\Omega)}.
        \end{align*}
        Therefore,
            \begin{equation}\label{eq2.2}
                \mathcal{J}_p[u_j] \geq \frac{1}{p}
                \|\nabla u_j\|^p_{L^{p}(\Omega)} -
                C\left(N, p, \|g\|_{L^{1}(\partial \Omega)}, \Omega \right)
                \|\nabla u_j\|_{L^{p}(\Omega)}.
        \end{equation}

        We now claim that $(u_j)_{j \in \mathbb{N}}$ must fulfil
            \[
                \|\nabla u_j\|_{L^p(\Omega)} \leq C(N, p, \Omega)
            \]
            uniformly in $p.$
            Otherwise, if for some subsequence
            $\|\nabla u_{j_k}\|_{L^p(\Omega)} \to \infty$ as $k \to \infty.$
            Then we would conclude from \eqref{eq2.2} that
            \[
                \mathcal{J}_p[u_{j_k}] \to \infty,
            \]
            which contradicts \eqref{Eq1ThmEx}.

            Furthermore, for $x_j \in \overline{\Omega}$ such that $u_j(x_j) = 0$
            (whose existence we already assured) we obtain
            \[
                |u_j(x)|  =  |u_j(x) - u_j(x_j)|  \leq
                C(N, p, \Omega) \|\nabla u_j\|_{L^p(\Omega)}|x-x_j|^{1-\frac{N}{p}}
                \leq  C(N, p, \Omega)\text{diam}(\Omega)^{1-\frac{N}{p}}.
            \]
            Therefore,
            \[
                \|u_j\|_{L^{\infty}(\Omega)}\leq C(N, p, \Omega).
            \] Hence, $(u_j)_{j \in \mathbb{N}}$ is uniformly bounded and equicontinuous.
            From compact embedding, converges (up to a subsequence) to a function
            $u_p$ strongly in $C^{0, 1-\frac{N}{p}}(\Omega)$. Thus, from the previous
            convergence we obtain
            \[
                \Leb(\{u_p>0\}) \leq \liminf_{j \to \infty} \Leb(\{u_j>0\}) \leq \alpha,\quad -\int_{\partial \Omega} gu_jd\H \to -
                \int_{\partial \Omega} gu_p\,d\H,
            \]
            and
            \[
                 \int_{\Omega} |\nabla u_p|^pdx \leq \liminf_{j \to \infty}
                 \int_{\Omega} |\nabla u_j|^pdx.
            \]
            Therefore, we conclude that
            \[
                \mathcal{J}_p[u_p] \leq \liminf_{j \to \infty} \mathcal{J}_p[u_j],
            \]
            which assures that $u_p$ is a minimizer. Observe that \eqref{Eq1ThmEx}
            $u_p\not \equiv0.$

    \medskip

        Finally, we show that if $u$  is minimizer
            of $\mathcal{J}_p[\cdot]$ over $\mathbb{K}_{\alpha}^p$   then
            \begin{equation}\label{eq:lba}
	            \Leb(\{u>0\})=\alpha.
            \end{equation}
            The proof is by contradiction.
            Suppose that there exists a minimizer $u$ and a constant
            $0< \varepsilon \ll 1$ such that
            \[
                 \Leb(\{u>0\}) = \alpha - \varepsilon.
            \]
            Notice that, arguing as before, we can show that $u\not\equiv0$ and $\{u>0\}\neq \emptyset $.

            Now, for $x_0 \in \partial \{u>0\}\cap\Omega$ fixed, select
            \[
                    0<r< \min\left\{\frac{1}{2}\dist(x_0, \partial \Omega), \,
                    \sqrt[n]{\frac{\varepsilon}{2\omega_N}}\right\},
            \]
            where $\omega_N  = \Leb(B_1(0))$.
            Next, we solve the following minimization problem:
            \[
                \min \left\{\frac{1}{p}\int_{B_r(x_0)} |\nabla v|^pdx\colon v \in W^{1, p}
                \left(B_r(x_0)\right),\,\,  v =  u \,\,\text{on} \,\,\partial B_r(x_0)\right\}.
            \]
            Such minimizers, let us call them $v_0$, are $p$-harmonic functions in
            $B_r(x_0)$. Moreover, notice that $u$ competes with $v_0$ in
            the minimization problem in $u$, that is
            \begin{equation}\label{eq2.3}
                 \frac{1}{p}\int_{B_r(x_0)} |\nabla v_0|^pdx
                 < \frac{1}{p}\int_{B_r(x_0)} |\nabla u|^pdx,
            \end{equation}
            where the strict inequality comes from the fact that $u$ is $p$-harmonic in
            $\{u\neq 0\}\cap B_r(x_0)$, but it is not $p-$harmonic
            across the free boundary. Now, setting
            \begin{equation}\label{eq2.4}
                \psi(x)\defeq
                    \begin{cases}
                        v_0(x) & \text{in }  \quad B_r(x_0),\\
                        u(x) & \text{in }  \quad \Omega \setminus B_r(x_0),	
                    \end{cases}
             \end{equation}
            we obtain a profile such that $v \in W^{1, p}(\Omega)$ and
                \begin{align*}
                    \Leb(\{\psi>0\}) & \leq   \Leb(\{u>0\}\setminus B_r(x_0))
                    +  \Leb(\{u>0\} \cap B_r(x_0))\\
                    & \leq   \Leb(\{u>0\}) +  \Leb(B_r(x_0)) \\
                    & <  (\alpha-\varepsilon) + \varepsilon =  \alpha.
                \end{align*}
            Finally, using \eqref{eq2.3} and \eqref{eq2.4} we conclude that
            \[
                \mathcal{J}_p[\psi] < \mathcal{J}_p[u]
                = \inf \left\{ \mathcal{J}_p[v] \colon v\in
                \mathbb{K}_\alpha^p
                \right\},
            \]
            contradicting the minimality of $u$. This completes the proof.
    \end{proof}

	\begin{remark}
		If $u$ and $v$ are two minimizers of $\mathcal{J}_p[\cdot]$ such that
		$\Leb(\{u+v>0\})\le \alpha$
		then $u\equiv v.$ This is due to the fact that $\mathcal{J}_p[\cdot]$ is strictly convex.
	\end{remark}

    \begin{remark}[Assumption on the boundary datum] \label{rem24}
        In this part we will discuss about the assumption on $g$.
        Remind that we have assumed the condition:
        \[
            \int_{\partial \Omega} g\,d\H>0.
        \]
        However, we could also consider two other possibilities:
        \begin{enumerate}
            \item $\displaystyle \int_{\partial \Omega} g(x)d\H=0$.
               In this case, our minimization problem reduces to
                \[
                    \inf \mathcal{J}_p[v] =
                    \inf\left\{ \mathcal{J}_p[v]\colon v \in \mathbb{K}^p_{\alpha}\right\}.
                \]
                In fact, for any constant $c>0$ and any admissible function
                $u \in \mathbb{K}^p_{\alpha}$ we have that
                $v= u-c \in \mathbb{K}^p_{\alpha}$ and
                \[
                    \mathcal{J}_p[v] = \mathcal{J}_p[u] - c\int_{\partial \Omega} gd\H
                    = \mathcal{J}_p[u].
                \]
                Therefore, in this case the volume constraint does not play any
                significant role in the minimization problem (compare with \cite{G-AMPR}).

            \item $\displaystyle \int_{\partial \Omega} g\,d\H<0$.
                In this case, by consider any sequence $0<a_k \to \infty$
                as $k \to \infty$, the constant functions
                $u_k=-a_k \in \mathbb{K}^p_{\alpha}$ satisfy
                \[
                    \mathcal{J}_p[u_k] = a_k\int_{\partial \Omega} g\,d\H \to -\infty
                    \quad \text{as } k \to \infty,
                \]
                which implies that our minimization problem does not admit a minimizer.
        \end{enumerate}
    \end{remark}

	\begin{remark} \label{rem.2.6}
It is straightforward to verify that when the boundary datum $g$ is a non-negative
    function, then any minimizer $u_0$ to \eqref{eqMin_ps} will also be non-negative in the whole
    $\overline{\Omega}$. This remark will be crucial in the symmetry results and in the optimal transportation argument.
	\end{remark}

    \subsection*{A spherical symmetrization result}

        Next, we will look at our optimization problem
        when the domain is a ball, $\Omega = B_1(0)$, and $g$ is
        spherically symmetric and strictly decreasing with respect to some axis. For that purpose, an essential
        tool is played by the \textit{spherical symmetrization}.

        Given a measurable set $\mathcal{E}\subset\R^N$, the spherical symmetrization
        $\mathcal{E}^{\ast}$ of $\mathcal{E}$ with respect to an axis given by a unit
        vector $e_k$ is constructed as follows:
        For each positive number $r$, take the intersection
        $\mathcal{E}\cap \partial B_r(0)$ and replace it by the spherical portion
        of the same $\mathcal{H}^{N-1}-$measure and center $re_k.$ The union of these
        caps is  $\mathcal{E}^{\ast}.$
        Now, the spherical symmetrization $u^{\ast}$ of a measurable function
        $u\colon \Omega \to \R$ is constructed by symmetrizing the super-level sets
        so that, for all $t$
        $$
            \{u^{\ast}\geq t\} = \{u\geq t\}^{\ast}.
        $$
        We recommend to the reader references \cite{Kawohl} and \cite{Sperner}
        for more details.
       We will use the following result.

        \begin{theorem}\label{reluu*}\hfill
            \begin{enumerate}
                \item[a)]\label{Stet1}  Let $u \in W^{1, p}(B_1(0))$ be non-negative.
                     Then, $u^{\ast} \in W^{1, p}(B_1(0)),$ and
                    \[
                        \int_{B_1(0)} |u^{\ast}|^p\, dx =
                        \int_{B_1(0)} |u|^p\, dx,\quad
                        \text{ and }
                        \quad
                         \int_{B_1(0)} |\nabla u^{\ast}|^p\, dx \leq
                        \int_{B_1(0)} |\nabla u|^p\, dx.
                    \]
                \item[b)]\label{Stet2} If $u$ is a non-negative mensurable function in
                $\overline{B_1(0)}$ and $v$ is a non-negative mensurable function in
                $\partial B_1(0)$ then
                \begin{equation}\label{eq:P1}
	                \int_{\partial B_1(0)} uv\,d\H
                \leq \int_{\partial B_1(0)} u^{\ast}v^{\ast}\,d\H.
                \end{equation}

            \end{enumerate}
        \end{theorem}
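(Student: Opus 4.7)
\textbf{Proof proposal for Theorem \ref{reluu*}.}

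My plan is to establish both parts via layer-cake / coarea representations, reducing everything to statements on each individual sphere $\partial B_r(0)$, where the spherical cap $\{u^{\ast}\geq t\}\cap\partial B_r(0)$ has the same $\H$-measure as $\{u\geq t\}\cap\partial B_r(0)$ by definition.

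For part (a), I would start with the $L^p$ norm identity. Using the layer-cake formula
\begin{equation}
\int_{B_1(0)} u^p\,dx = p\int_0^\infty t^{p-1}\Leb(\{u>t\})\,dt,
\end{equation}
and the same identity for $u^{\ast}$, it suffices to prove $\Leb(\{u^{\ast}>t\})=\Leb(\{u>t\})$ for a.e.\ $t>0$. Writing the Lebesgue measure in polar coordinates and using that by construction
\begin{equation}
\H\bigl(\{u\geq t\}\cap\partial B_r(0)\bigr)=\H\bigl(\{u\geq t\}^{\ast}\cap\partial B_r(0)\bigr)=\H\bigl(\{u^{\ast}\geq t\}\cap\partial B_r(0)\bigr)
\end{equation}
for each $r\in(0,1)$, an integration in $r$ gives the equality of Lebesgue measures. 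For the gradient inequality, I would use the coarea formula
\begin{equation}
\int_{B_1(0)}|\nabla u|\varphi(u)\,dx=\int_{-\infty}^{\infty}\varphi(t)\,\H(\{u=t\}\cap B_1(0))\,dt,
\end{equation}
combined with the isoperimetric inequality on each sphere $\partial B_r(0)$: among measurable subsets of fixed $\H$-measure, the spherical cap minimizes the relative perimeter. Standard density-arguments (approximating $u$ by Lipschitz functions with a.e.\ non-vanishing gradient on regular level sets via Sard and smoothing) reduce the problem to comparing perimeters on each sphere, which in turn yields the Pólya--Szegő inequality $\int|\nabla u^{\ast}|^p\leq\int|\nabla u|^p$ after invoking H\"older with respect to the radial variable and the transformation formula for $\nabla u^{\ast}$.

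For part (b), I would use the two-variable layer-cake identity
\begin{equation}
\int_{\partial B_1(0)}uv\,d\H=\int_0^\infty\!\!\int_0^\infty \H\bigl(\{u>t\}\cap\{v>s\}\cap\partial B_1(0)\bigr)\,dt\,ds,
\end{equation}
together with the corresponding identity for $u^{\ast}v^{\ast}$. The inequality \eqref{eq:P1} then follows from the pointwise spherical Hardy--Littlewood rearrangement inequality
\begin{equation}
\H(A\cap B)\leq \H(A^{\ast}\cap B^{\ast})
\end{equation}
applied to $A=\{u>t\}\cap\partial B_1(0)$ and $B=\{v>s\}\cap\partial B_1(0)$, where $A^{\ast},B^{\ast}$ are the spherical caps around the common pole $e_k$ with the same $\H$-measure. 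This last inequality is geometric: two caps around a common pole intersect in the smaller cap, whose measure is an upper bound for $\H(A\cap B)$ given the constraints on $\H(A),\H(B)$.

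The main technical obstacle is the Pólya--Szegő step for the gradient in part (a), because the spherical symmetrization acts only on the angular variable and leaves the radial variable untouched; hence one must decompose $|\nabla u|^2=|\partial_r u|^2+r^{-2}|\nabla_{S^{N-1}} u|^2$ and handle the tangential component on each sphere via the spherical isoperimetric inequality, while the radial piece requires an additional one-dimensional rearrangement compatibility argument that can be bypassed by noting $\partial_r u^{\ast}$ equals (a.e.) the rearranged value of $\partial_r u$ along the appropriate radial ray. I would carry out all these steps for smooth non-negative $u$ first and then pass to $W^{1,p}$ by standard density, using the $L^p$-continuity of the rearrangement proved in the first step.
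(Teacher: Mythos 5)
Your part (b) and the $L^p$-norm identity in part (a) are essentially what the paper does and are fine: the paper also reduces \eqref{eq:P1} to characteristic functions (your two-variable layer-cake formula is the same reduction) and uses that two caps about the common pole are nested, so that $\H(A\cap B)\le\min\{\H(A^{\ast}\cap\partial B_1(0)),\H(B^{\ast})\}=\H(A^{\ast}\cap B^{\ast})$; the equimeasurability giving the $L^p$ identity is quoted from Kawohl, while you derive it by integrating over spheres, which is equivalent.

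The gap is in the gradient inequality of part (a). The paper does not prove the P\'olya--Szeg\H{o} inequality for cap symmetrization; it cites Sperner's theorem for smooth non-negative functions and then passes to $W^{1,p}$ by mollification, using the $L^1$-nonexpansivity $\|f^{\ast}-g^{\ast}\|_{L^1}\le\|f-g\|_{L^1}$ to identify the weak limit of $v_n^{\ast}$ with $u^{\ast}$. Your attempt to prove the inequality from scratch fails at two points. First, the claim that $\partial_r u^{\ast}$ equals a.e.\ the rearranged value of $\partial_r u$ along the radial ray is false: the angular rearrangement performed on $\partial B_r(0)$ varies with $r$, so $\partial_r(u^{\ast})\neq(\partial_r u)^{\ast}$ in general; what is true is only the $L^p$-nonexpansivity of the rearrangement applied to difference quotients, which yields an integral inequality for the radial derivative, not a pointwise identity. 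Second, and more fundamentally, even granting $\int|\partial_r u^{\ast}|^p\le\int|\partial_r u|^p$ together with the spherical isoperimetric control of the tangential derivative on each sphere, these two separate estimates cannot be recombined into an estimate for $\int\bigl(|\partial_r u|^2+r^{-2}|\nabla_{S^{N-1}}u|^2\bigr)^{p/2}$ when $p\neq 2$, because the full gradient norm is not additive in the radial and tangential components; this is exactly the difficulty that makes Sperner's proof (and the later polarization proofs) nontrivial, and your appeal to ``H\"older with respect to the radial variable'' does not bridge it. As written, your outline covers at best $p=2$, whereas the theorem is needed for all $p>N$; the clean repair is to quote the smooth case from Sperner and run the density argument as the paper does.
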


        \begin{proof}
            We first show {\it (a)}.
            By \cite[(C) page 22]{Kawohl},
            \begin{equation}\label{eq:aux1}
	            \int_{B_1(0)} |f|^p dx=\int_{B_1(0)} |f^\ast|^p dx.
            \end{equation}
            for any non-negative function $f\in L^p(B_1(0)).$ Therefore,
            we only need
            to show that
            if $u \in W^{1, p}(B_1(0))$ is non-negative then
            \[
	             \int_{B_1(0)} |\nabla u^{\ast}|^p\, dx \leq
                        \int_{B_1(0)} |\nabla u|^p\, dx.
            \]

            In \cite{Sperner}, the author show that
            if $v\in C^\infty(\R^N)$ and is non-negative then
            \begin{equation}\label{eq:aux2}
	            \int_{B_1(0)}|\nabla v^\ast|^p \, dx\le\int_{B_1(0)}|\nabla v|^p \, dx.
            \end{equation}
            Whereas in \cite[(M7) page 21]{Kawohl}, it is proven that
            \begin{equation}\label{eq:aux3}
	             \|f^\ast-g^\ast\|_{L^1(B_1(0))}\le\|f-g\|_{L^1(B_1(0))}
            \end{equation}
            for every non-negative functions $f, g\in L^1(B_1(0)).$

            Given a non-negative function $u \in W^{1, p}(B_1(0)),$ we take
            \[
                \bar{u}(x)=
                    \begin{cases}
                        u(x) &\text{ if } \quad x\in B_1(0),\\
                        0 &\text{ if } \quad x\in \mathbb{R}^N\setminus B_1(0),
                    \end{cases}
            \]
            and set $v_n=\rho_n\star \bar{u}$ (where $\rho_n$ is a sequence of mollifiers).
            Then $v_n\in C^{\infty}(\mathbb{R}^N)$ is nonnegative and $v_n\to u$
            strongly in
            $W^{1,p}(\Omega).$ Moreover, using  \eqref{eq:aux1}, \eqref{eq:aux2}, and
            \eqref{eq:aux3}, we have that $v_n^\ast\to u^\ast\ $
            weakly in $W^{1,p}(\Omega).$ Therefore
            \[
	            \int_{B_1(0)}|\nabla u^\ast|^p \, dx\le
	            \liminf_{n\to\infty}\int_{B_1(0)}|\nabla v_n^\ast|^p \, dx\le
	            \lim_{n\to\infty}\int_{B_1(0)}|\nabla v_n|^p \, dx
	            =\int_{B_1(0)}|\nabla u|^p \, dx.
            \]

            To finish the proof, we prove {\it (b).} In first step, we show that
            \eqref{eq:P1} holds for characteristic function. Let $A\subset
            \overline{B_1(0)}$ and $B\subset\partial B_1(0)$ be two mensurable sets and
            $u(x)=\chi_A(x)$ and $v(x)=\chi_B(x).$ Observe that, by definition,
            $u^\ast(x)=\chi_{A^\ast}(x)$ and $v^\star(x)=\chi_{B^\ast}(x)$ and
            $A^\ast\cap\partial B_1(0)\subseteq B^\ast$ or
            $B^\ast\subseteq A^\ast\cap\partial B_1(0).$ Thus
            \[
                u(x)v(x)=
                 \begin{cases}
                    1 &\text{if } \quad x\in A\cap B,\\
                    0 &\text{if } \quad x\in \R^N\setminus A\cap B,
                 \end{cases}
                 \quad \text{ and } \quad
                u^\ast(x)v^\ast(x)=
                 \begin{cases}
                    u^\ast(x) &\text{if } \quad A^\ast\cap\partial B_1(0)\subseteq B^\ast,\\
                    v^\ast(x) &\text{if } \quad B^\ast\subseteq A^\ast\cap\partial B_1(0),
                 \end{cases}
            \]
            Then,
            \begin{align*}
                 \int_{\partial B_1(0)} uv\,d\H&=
                 \H(A\cap B)\\
                 &\le
                 \begin{cases}
                    \H (A\cap\partial B_1(0)) \\
                   \H (B)
                 \end{cases}\\
                 &=
                 \begin{cases}
                    \H (A^\ast\cap\partial B_1(0)) \\
                   \H (B^\ast)
                 \end{cases}\\
                 &=  \int_{\partial B_1(0)} u^\ast v^\ast\,d\H.
            \end{align*}

            Thus, it is easy to see that \eqref{eq:P1} holds for non-negative steps
            function. Finally, as any measurable function can be approximate by steps
            functions, we can prove the assertion by an approximation argument.
        \end{proof}

        \begin{remark}
        \label{rem-symm}
        Notice that, if $v=v^{\ast} \geq 0$ is spherically strictly decreasing, then equality in
        {\it (b)},
        $$
	                \int_{\partial B_1(0)} uv^*\,d\H
                \leq \int_{\partial B_1(0)} u^{\ast}v^{\ast}\,d\H,
                $$
                for a non-negative $u$
                implies that also $u$ is spherically symmetric, $u=u^{\ast}$.
                In fact, we have
                $$
                \begin{array}{rcl}
                \displaystyle
                \int_{\partial B_1(0)}
uv^{\ast}  \,d\H & = &  \displaystyle \int_{\partial B_1(0)} \int_0^\infty  \int_0^\infty
\chi_{\{u(x)>s\}} \chi_{\{v^*(x)>t\}} \, ds\, dt \,d\H\\[10pt]
& = & \displaystyle \int_0^\infty  \int_0^\infty
\H (\{u(x)>s\} \cap \{v^*(x)>t\}) \, ds\, dt\\[10pt]
 & =& \displaystyle \int_0^\infty  \int_0^\infty
\H (\{u^*(x)>s\} \cap \{v^*(x)>t\}) \, ds\, dt\\[10pt]
 & = & \displaystyle \int_{\partial B_1(0)} u^{\ast}v^{\ast}  \,d\H.
\end{array}
                $$
                Therefore, $u$ and $v^{\ast}$ have the same family of level sets, and hence $u=u^*$. Note that we are using here that when $v=v^{\ast}$ is strictly spherically decreasing its family of level sets covers the whole family of spherical caps, from $\{ e_k \}$ to the whole $\partial B_1(0)$.
        \end{remark}

        Finally, we prove our symmetry result. This ends the proof of Theorem \ref{ThmExistMin.intro}.

        \begin{theorem}\label{SymmetThm}
            Let $\Omega=B_1(0)$ and $u_p$ be  a minimizers of $
		    \mathcal{J}_p[\cdot]$ over $\mathbb{K}_{\alpha}^p.$
		    Suppose that $0\leq g = g^{\ast}$.
		    Then, there is a minimizer, $u_p$, that
		    is spherically symmetric.
		
		    In addition, when $0\leq g=g^\ast$ is spherically strictly decreasing, every
		    minimizer is spherically symmetric on $\partial B_1(0)$.
        \end{theorem}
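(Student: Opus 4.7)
The plan is to apply the spherical symmetrization directly to a non-negative minimizer and use the properties collected in Theorem~\ref{reluu*} to show that the symmetrized function is itself admissible and has no larger energy, hence is also a minimizer.

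First I would observe that, since $g \geq 0$, Remark~\ref{rem.2.6} lets us assume $u_p \geq 0$, so that $u_p^{\ast}$ is well-defined. The admissibility $u_p^{\ast} \in \mathbb{K}^p_{\alpha}$ is immediate from the defining property $\{u_p^{\ast} \geq t\} = \{u_p \geq t\}^{\ast}$, which preserves the $\Leb$-measure of every super-level set; in particular $\Leb(\{u_p^{\ast} > 0\}) = \Leb(\{u_p > 0\}) \leq \alpha$.

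Next I would compare $\mathcal{J}_p[u_p^{\ast}]$ and $\mathcal{J}_p[u_p]$ term by term. Theorem~\ref{reluu*}(a) gives the gradient inequality $\int_{B_1(0)} |\nabla u_p^{\ast}|^p\, dx \leq \int_{B_1(0)} |\nabla u_p|^p\, dx$, while Theorem~\ref{reluu*}(b), combined with the assumption $g = g^{\ast}$, gives the opposite-sign inequality $\int_{\partial B_1(0)} g u_p\, d\H \leq \int_{\partial B_1(0)} g u_p^{\ast}\, d\H$. Both inequalities contribute to lowering $\mathcal{J}_p$, so $\mathcal{J}_p[u_p^{\ast}] \leq \mathcal{J}_p[u_p]$. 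By minimality this must be an equality, hence $u_p^{\ast}$ is a spherically symmetric minimizer, yielding the first assertion.

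For the strict case, suppose $g = g^{\ast}$ is strictly spherically decreasing and let $u_p$ be an arbitrary minimizer. The argument above forces $\mathcal{J}_p[u_p] = \mathcal{J}_p[u_p^{\ast}]$, and since the two inequalities that produced this comparison both point in the direction of lowering $\mathcal{J}_p$, equality must hold in each of them. In particular
\[
\int_{\partial B_1(0)} g u_p\, d\H = \int_{\partial B_1(0)} g^{\ast} u_p^{\ast}\, d\H.
\]
At this stage I would invoke Remark~\ref{rem-symm}: equality in the Hardy-Littlewood type bound on $\partial B_1(0)$, when $g = g^{\ast}$ is strictly spherically decreasing, forces $u_p = u_p^{\ast}$ on $\partial B_1(0)$, which is precisely the desired spherical symmetry on the boundary.

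The main delicate point is this last deduction: one has to separate the two inequalities that jointly give $\mathcal{J}_p[u_p^{\ast}] \leq \mathcal{J}_p[u_p]$ and argue that equality in the sum (forced by minimality) propagates to each summand. This works cleanly here only because both inequalities push the functional in the same direction, so none can compensate for strictness in the other. Once the boundary equality is isolated, the rigidity built into Remark~\ref{rem-symm}, which genuinely uses the strict spherical monotonicity of $g$ so that its level sets exhaust the family of spherical caps, delivers the symmetry of $u_p$ on $\partial B_1(0)$.
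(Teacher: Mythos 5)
Your proposal is correct and follows essentially the same route as the paper: symmetrize a non-negative minimizer, use Theorem~\ref{reluu*}(a) for the gradient term and Theorem~\ref{reluu*}(b) for the boundary term to conclude $\mathcal{J}_p[u_p^{\ast}]\leq\mathcal{J}_p[u_p]$, and then invoke the rigidity of Remark~\ref{rem-symm} in the strictly decreasing case. If anything, you are slightly more explicit than the paper in isolating why equality in the sum forces equality in the boundary term (both inequalities push $\mathcal{J}_p$ the same way), a step the paper leaves implicit.
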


        \begin{proof}
            Theorem \ref{ThmExistMin} assures that there exists a profile
            $u_p\in W^{1, p}(\Omega)$ such that
            $$
                \Leb(\{u_p>0\})=\alpha \qquad\text{and}\qquad
                \mathcal{J}_p[u_p]=\inf\left\{\mathcal{J}_p[v]\colon
                v\in\mathbb{K}_\alpha^p \right\}.
            $$
            Now, let $u_p^{\ast}$ be the spherical symmetrization of $u_p$.
            Notice that $u_p^{\ast}$ is an admissible profile in
            the optimization process of $\mathcal{J}_p[\cdot]$. In fact, by Remark \ref{rem.2.6}, since $g \geq 0$ then
            $u_p \geq 0$ and therefore one can apply the results in Theorem \ref{reluu*} to obtain that
            $$
                u^{\ast} \in W^{1, p}(\Omega), \,\,\,\Leb(\{u_p^{\ast}>0\})
                = \Leb(\{u_p>0\})= \alpha \,\,\, \text{and} \,\,\,
                \displaystyle -\int_{\partial \Omega} ug\,dx \geq
                -\int_{\partial \Omega} u^{\ast}g^{\ast}\,dx =
                 -\int_{\partial \Omega} u^{\ast}g\,dx.
            $$
            Hence, once again by Theorem \ref{reluu*},
            $$
                \inf\left\{\mathcal{J}_p[v]\colon
                      v\in\mathbb{K}_\alpha^p \right\}
                      \leq \mathcal{J}_p[u_p^{\ast}]\leq
                      \mathcal{J}_p[u_p]=\inf\left\{\mathcal{J}_p[v]\colon
                        v\in\mathbb{K}_\alpha^p \right\}.
            $$
            Therefore,
            \begin{equation}\label{sym}
                \inf\left\{\mathcal{J}_p[v]\colon v\in\mathbb{K}_\alpha^p \right\}
                = \mathcal{J}_p[u_p^{\ast}].
            \end{equation}
            Hence, we conclude the existence of a minimizer that is spherically symmetric.

            Now, let us assume that $0\leq g=g^\ast$ is spherically strictly decreasing
            and let $u$ be a minimizer. From our previous calculations we must have
            $$
	                \int_{\partial B_1(0)} ug^{\ast}\,d\H
                \leq \int_{\partial B_1(0)} u^{\ast}g^{\ast}\,d\H,
                $$
            and then, from Remark \ref{rem-symm}, we obtain that $u=u^{\ast}$ on $\partial B_1(0)$, as we wanted to show.
        \end{proof}

        As a byproduct of this result we obtain that there is a minimizer such that its null set
        $\{u_p =0 \}$ is spherically symmetric.

  \subsection*{Viscosity solutions}

    Let us present a brief introduction to the theory of viscosity solutions for second
    order fully nonlinear elliptic equations. Recall that a continuous function
    $F\colon \overline{\Omega}\times \R^N \times \text{Sym}(N) \to \R$
    is called \textit{degenerate} elliptic if
    $$
        F(x, \xi, \mathrm{X}) \leq F(x, \xi, \mathrm{Y})
        \quad \text{whenever} \quad \mathrm{Y}\leq \mathrm{X} \quad
        \text{in the sense of matrices}.
    $$
    Along this paper we will use:
    \begin{enumerate}
        \item $F(x, \nabla u, D^2 u) = - \nabla u^TD^2u
     \nabla u = -\Delta_{\infty} u$;
        \item $F(x, \nabla u, D^2 u) = - \left[|\nabla u|^{p-2}\tr(D^2 u)
        + (p-2)|\nabla u|^{p-4}\nabla u^TD^2u \nabla u\right]$.
    \end{enumerate}

    Taking into account general boundary data, let us recall the appropriate definition of
    viscosity solutions in our context. Concerning general theory of viscosity solutions
    to fully nonlinear elliptic equations we refer the reader to the surveys
    \cite{Barl,CIL,Ish,IL}.

    \begin{definition}[Viscosity solution]\label{DefViscSol}
        Consider the following boundary value problem:
        \begin{equation}\label{EqViscSol}
        \begin{cases}
	        F(x, \nabla u, D^2 u)  =  0  &\text{in }  \quad  A, \\
            \quad H(x, u, \nabla u)  =  0  & \text{on }  \quad  \partial A,
        \end{cases}
        \end{equation}
        where $F\in C(\overline{A}\times \R^N \times \text{Sym}(N))$ is a
        degenerate elliptic function and
        $H\in C(\partial A\times\mathbb{R}\times\R^N).$
		\begin{enumerate}
  			\item A lower semi-continuous function $u$ is said a viscosity supersolution to
  				\eqref{EqViscSol} if for every $\phi \in C^2(A)$ such that $u-\phi$
  				has a strict minimum at the point $x_0 \in \overline{A}$ with
  				$u(x_0) = \phi(x_0)$ we have:
				\begin{itemize}
  					\item[\checkmark] If $x_0 \in \partial A$ the inequality holds
  						\[
  							\max\left\{F(x_0, \nabla \phi(x_0),
  							D^2 \phi(x_0)), H(x_0, \phi(x_0), \nabla \phi(x_0))\right\}\geq 0.
  						\]
  					\item[\checkmark] if $x_0 \in A$ then we require
  						\[
  							F(x_0, \nabla \phi(x_0), D^2 \phi(x_0))\geq 0.
  						\]
			\end{itemize}

  			\item An upper semi-continuous function $u$ is said a viscosity subsolution to
  				\eqref{EqViscSol} if for every $\phi \in C^2(A)$ such that $u-\phi$ has a
  				strict maximum at the point $x_0 \in \overline{A}$ with $u(x_0) = \phi(x_0)$
  				we have:
				\begin{itemize}
 					 \item[\checkmark] If $x_0 \in \partial A$ the inequality holds
  						\[
  							\min\left\{F(x_0, \nabla \phi(x_0), D^2 \phi(x_0)),
  							H(x_0, \phi(x_0), \nabla \phi(x_0))\right\}\leq 0.
  						\]
  					\item[\checkmark] if $x_0 \in A$ then we require
  						\[
  							F(x_0, \nabla \phi(x_0), D^2 \phi(x_0))\leq 0.
  						\]
				\end{itemize}
		\end{enumerate}

		Finally, a continuous function $u$ is said a viscosity solution to
		\eqref{EqViscSol} if it is simultaneously a viscosity supersolution and
		a viscosity subsolution.

		When $F$ is not continuous we need to consider the lower semicontinous $F_*$, $H_*$
		and upper semicontinous $F^*$, $H^*$ envelopes of $F$ and $H$ respectively.
		In 1. of the previous definition
		we ask for
		  						\[
  							\max\left\{F^*(x_0, \nabla \phi(x_0),
  							D^2 \phi(x_0)), H^*(x_0, \phi(x_0), \nabla \phi(x_0))\right\}\geq 0
  					\quad \mbox{ or }  \quad F^*(x_0, \nabla \phi(x_0), D^2 \phi(x_0))\geq 0.
  						\]
    While in 2. we ask for
	\[
  							\min\left\{F_*(x_0, \nabla \phi(x_0),
  							D^2 \phi(x_0)), H_*(x_0, \phi(x_0), \nabla \phi(x_0))\right\}\leq 0
  					\quad \mbox{ or }  \quad F_*(x_0, \nabla \phi(x_0), D^2 \phi(x_0))\leq 0.
  						\]
	\end{definition}

	From now on we assume that $g\in C(\partial\Omega).$ We will use the following notations:
	\[
   		F_p(x, \xi, \mathrm{X}) \defeq -\left[|\xi|^{p-2}\tr(\mathrm{X})
   		+ (p-2)\langle\mathrm{X}\xi, \xi\rangle\right]
		\qquad \mbox{and} \qquad
		H_p(x, \xi) \defeq |\xi|^{p-2}\langle \xi, \eta(x) \rangle-g(x).
	\]
	Notice that these two functions are continuous (and hence $F^*=F_*=F$ and $H^*=H_* =H$).
	
	\begin{remark}\label{RemMonotB_p}
		 We need to highlight that since $H_p$ is monotone in the
		 variable $\frac{\partial u}{\partial \eta}$, then Definition
	     \ref{DefViscSol} admits a simpler form (cf. \cite{Barl}).
		 To be precise, if $u$ is a viscosity supersolution and
		 $\phi \in C^2(\overline{\Omega})$ is such that $u-\phi$ has
		 a strict minimum at $x_0$ with $u(x_0) = \phi(x_0)$, then
		\begin{itemize}
			\item[\checkmark] If $x_0 \in \Omega$, then
			  $$
				  -\left[\Delta_{\infty} \phi(x_0) + \frac{|\nabla
				  	\phi(x_0)|^2\Delta \phi(x_0)}{p-2}\right] \geq 0.
			  $$
			\item[\checkmark] If $x_0 \in \partial \Omega$, then
			  $$
				  H_p(x_0, \phi(x_0)) \geq 0,
			  $$
		\end{itemize}
		and the opposite inequalities for the case in which $u-\phi$ has
		 a strict maximum at $x_0$.
		
    Observe that the limit boundary condition \eqref{eqlim} does not fulfil such a monotonicity condition and hence to understand sub and super solutions in the viscosity sense at boundary points one needs to take $\min$ or $\max$ between the equation and the boundary condition as in Definition \ref{DefViscSol}.
	\end{remark}

	The next result gives that continuous weak solutions to \eqref{eqp} are also viscosity
	solutions.

	\begin{lemma}\label{equisols}
		Let $p>2,$ $g\in C(\partial\Omega)$   and $u$ be a continuous weak solution of \eqref{eqp}.
		Then $u$ is a viscosity solution of
		\[
			 \begin{cases}
			 	F_p(x, \nabla u, D^2 u)  =  0 & \text{in }  \quad  \{u > 0\} \cup \{u < 0\}, \\
   			\qquad H_p(x, \nabla u)  =  0 & \text{on }  \quad  \partial \Omega.
			\end{cases}
  		\]
	\end{lemma}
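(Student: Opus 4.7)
My plan is to prove this by the standard contradiction argument linking weak and viscosity formulations of quasi-linear equations, handling the interior equation and the boundary condition separately, and only writing out the supersolution case (the subsolution case being entirely analogous with reversed inequalities). For the interior part, let $\phi\in C^2$ touch $u$ from below at $x_0\in\{u>0\}\cup\{u<0\}$ with a strict local minimum and $u(x_0)=\phi(x_0)$; by symmetry assume $u(x_0)>0$. Suppose for contradiction that $F_p(x_0,\nabla\phi(x_0),D^2\phi(x_0))<0$. By continuity this strict inequality persists on a ball $B_r(x_0)$, which I shrink so that $\overline{B_r(x_0)}\subset\{u>0\}$. Setting $m\defeq\min_{\partial B_r(x_0)}(u-\phi)>0$, $\psi\defeq\phi+m/2$ and $\varphi\defeq(\psi-u)_+$, the function $\varphi$ is supported compactly inside $B_r(x_0)\subset\{u>0\}$, hence admissible in the weak formulation. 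Testing the weak equation against $\varphi$ gives
\[
\int_{\{\psi>u\}}|\nabla u|^{p-2}\nabla u\cdot\nabla(\psi-u)\,dx=0,
\]
while integration by parts applied to the classical strict inequality $-\Delta_p\psi<0$ against the same $\varphi$ produces the analogous integral with $\psi$ in place of $u$ that is strictly negative; subtracting contradicts the strict monotonicity $\bigl(|\xi|^{p-2}\xi-|\zeta|^{p-2}\zeta\bigr)\cdot(\xi-\zeta)>0$ for $\xi\neq\zeta$.

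For the boundary assertion at $x_0\in\partial\Omega$ I need to rule out that both $F_p(x_0,\nabla\phi(x_0),D^2\phi(x_0))<0$ and $H_p(x_0,\nabla\phi(x_0))<0$ simultaneously. Arguing by contradiction and using continuity of $F_p$, $H_p$ and $g$, there exists $r>0$ with $F_p(\cdot,\nabla\phi,D^2\phi)<0$ on $B_r(x_0)\cap\Omega$ and $|\nabla\phi|^{p-2}\partial_\eta\phi-g<0$ on $B_r(x_0)\cap\partial\Omega$; I shrink $r$ further so that $B_r(x_0)\cap\overline{\Omega}$ avoids the nodal set $\{u=0\}$. The construction of $\psi=\phi+m/2$ and $\varphi=(\psi-u)_+$ proceeds as before, but now $\varphi$ does not generally vanish on $\partial\Omega\cap B_r(x_0)$. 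The weak formulation for $u$ produces the boundary flux $\int_{\partial\Omega\cap B_r(x_0)}g\,\varphi\,d\H$, while integrating by parts against the classical inequalities satisfied by $\psi$ yields a boundary flux $\int_{\partial\Omega\cap B_r(x_0)}|\nabla\psi|^{p-2}\partial_\eta\psi\,\varphi\,d\H$ that is strictly smaller than $\int_{\partial\Omega\cap B_r(x_0)}g\,\varphi\,d\H$. Combining with the interior strict inequality for $\psi$ and subtracting the two identities yields
\[
\int_{\{\psi>u\}\cap B_r(x_0)\cap\Omega}\bigl(|\nabla\psi|^{p-2}\nabla\psi-|\nabla u|^{p-2}\nabla u\bigr)\cdot\nabla(\psi-u)\,dx<0,
\]
again contradicting the strict monotonicity of $\xi\mapsto|\xi|^{p-2}\xi$.

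I expect the main obstacle to be the admissibility of the cut-off test function $\varphi=(\psi-u)_+$, which must vanish on the nodal set $\{u=0\}$ in order to be plugged into the weak formulation of \eqref{eqp}. Away from the free boundary this is automatic by continuity of $u$ together with the hypothesis $u(x_0)\neq 0$, and the radius $r$ can be selected accordingly. At boundary points $x_0\in\partial\Omega$ with $u(x_0)=0$, by contrast, no such ball is available, and one has to interpret the viscosity inequality through the lower and upper semicontinuous envelopes of $F_p$ and $H_p$ as in Definition~\ref{DefViscSol}, and restrict the contradiction argument to the single connected component of $\{u>0\}\cup\{u<0\}$ whose closure contains $x_0$. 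A subsidiary technicality is the degeneracy of $F_p$ at test points with $\nabla\phi(x_0)=0$: there $F_p(x_0,0,D^2\phi(x_0))=0$ and both viscosity inequalities hold trivially, so nothing needs to be verified.
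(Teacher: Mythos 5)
Your argument is correct and follows essentially the same route as the paper's own proof: a contradiction argument in which the test function is lifted by a fraction of the minimal gap on the sphere, compared against $u$ through the weak formulation, and the conclusion drawn from the strict monotonicity of $\xi\mapsto|\xi|^{p-2}\xi$, with the interior and boundary cases handled separately and the boundary flux terms estimated exactly as you describe. Your closing remarks on the admissibility of $(\psi-u)_+$ near the nodal set and on the degenerate case $\nabla\phi(x_0)=0$ are, if anything, more explicit than the paper's exposition.
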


	\begin{proof}
		Let us proceed for the case of super-solutions.
		Fix $x_0 \in \overline{\Omega}$. We will divide the analysis into
		two cases:
		
		{\bf 1)} If $x_0 \in \Omega \cap ( \{u > 0\} \cup \{u < 0\})$.
			   In this case, let $\phi \in C^2(\Omega)$ be a test
				function such that $u(x_0) = \phi(x_0)$ and $u-\phi$ has
				a strict minimum at $x_0$. Our goal is to show that:
				$$
					 F_p(x_0, \nabla \phi(x_0), D^2 \phi(x_0)) \geq 0.
				$$
				Assume, for sake of contradiction that such a conclusion
				does not hold. Then, by continuity should exist a radius
				$\varrho>0$ such that
				$$
					F_p(x, \nabla \phi(x), D^2 \phi(x)) < 0 \quad
					\text{for all} \quad x \in
					B_{\varrho}= B_{\varrho}(x_0).
				$$
				Taking $\varrho$ smaller if necessary we can assume that
                $B_{\varrho}\subset\{u>0\}$ when $u(x_0)>0$ and $B_\rho\subset\{u<0\}$
                if $u(x_0)<0$.
	
				Now, consider $\displaystyle \iota \defeq \inf_{\partial
					B_{\varrho}} (u-\phi)(x)$ and $\Phi(x) \defeq
				\phi(x) + \frac{\iota}{10}$. Notice that such a function
				fulfils
				$$
					-\div(|\nabla \Phi|^{p-2}\nabla \Phi)<0 \qquad
					(pointwisely)\quad \text{in} \quad
					B_{\varrho}\qquad \text{and}\qquad u(x_0)< \Phi(x_0).
				$$
				Multiplying the previous inequality by $(\Phi-u)_{+}$
				(extended by zero outside $B_{\varrho}$) we obtain:
				\begin{equation}\label{Eq1}
					  \displaystyle \int_{\{\Phi>u\}\cap B_{\varrho}}
					  |\nabla \Phi|^{p-2}\nabla \Phi\cdot \nabla
					  (\Phi-u)dx<0.
				\end{equation}
				On the other hand, by taking $(\Phi-u)_{+}$ as test
				function in the weak formulation of \eqref{eqp} we obtain
				\begin{equation}\label{Eq2}
				  \displaystyle \int_{\{\Phi>u\}\cap B_{\varrho}}
				  |\nabla u|^{p-2}\nabla u\cdot \nabla (\Phi-u)dx=0.
				\end{equation}
				Next, subtracting \eqref{Eq1} from \eqref{Eq2} we get
				\begin{equation} \label{Eq3}
					\displaystyle  \int_{\{\Phi>u\}\cap B_\rho}
					\left(|\nabla \Phi|^{p-2}\nabla \Phi - |\nabla
					u|^{p-2}\nabla u\right) \cdot \nabla (\Phi-u) dx < 0.
				\end{equation}
				Finally, since the left hand side in \eqref{Eq3} is
				bounded by below by
				$$
					  \displaystyle C(N, p)\int_{\{\Phi>u\}\cap
					  	B_{\varrho}} |\nabla \Phi- \nabla u|^pdx \geq
					  	0,
				$$
				 this obligates $\Phi \leq u$ in $B_{\varrho}$. Such a
				 contradiction proves the desired result.

			  {\bf 2)} If $x_0 \in \partial \Omega$.
				Our goal now will be to show that:
				$$
					 \max\left\{F_p(x_0, \nabla \phi(x_0), D^2
					 \phi(x_0)), H_p(x_0, \nabla
					 \phi(x_0))\right\} \geq 0.
				$$
				Once again let us assume that such a conclusion is not
				true. Then, proceeding as before, we conclude that
				\begin{equation}\label{Eq4}
					  \displaystyle \int_{\{\Phi>u\}\cap
					  	B_{\varrho}} |\nabla
					  \Phi|^{p-2}\nabla \Phi\cdot \nabla (\Phi-u)dx
					  < \int_{\partial (\{\Phi>u\}\cap
					  	B_{\varrho}) \cap \partial \Omega}
					  g(\Phi-u)d\H,
				\end{equation}
				and
				\begin{equation}\label{Eq5}
					  \displaystyle \int_{\{\Phi>u\}} |\nabla
					  u|^{p-2}\nabla u \cdot\nabla (\Phi-u)dx \geq
					  \int_{\partial (\{\Phi>u\}\cap
					  	B_{\varrho}) \cap \partial \Omega}
					  g(\Phi-u)d\H.
				\end{equation}
				Therefore,
				$$
					\displaystyle C(N, p)\int_{\{\Phi>u\}\cap
						B_{\varrho}} |\nabla \Phi-
					\nabla u|^pdx  \leq   \displaystyle
					\int_{\{\Phi>u\}\cap
						B_{\varrho}} \left(|\nabla \Phi|^{p-2}\nabla
					\Phi - |\nabla u|^{p-2}\nabla u\right) \cdot \nabla
					(\Phi-u) dx
				    <  0,
				$$
				which again yields a contradiction. This proves that $u$
				is a viscosity supersolution.

		Similarly, one can prove that a continuous weak subsolution is a
		viscosity subsolution.
	\end{proof}

\section{The asymptotic analysis as $p \to \infty$.}\label{Sec3}

	Our first goal in this section is to obtain some (uniform in $p$)
	estimates on sequence of solutions to \eqref{eqp}. Taking into
	account that we are interested in the asymptotic behaviour as $p\to
	\infty$, we may assume that $p > N$ and, for this reason $u_p \in
	C^{0, 1-\frac{N}{p}}(\overline{\Omega})$ according
	to Sobolev embedding theorem.

    \begin{lemma}\label{LemmaWUniEst}
       Let $g\in C(\partial \Omega)$ be such that
       \[
            \int_{\partial \Omega} g(x)d\H >0,
       \]
        and
		$({u_p})_{p >N}$ be a sequence such that $u_p$ is a minimizers of $
		\mathcal{J}_p[\cdot]$ over $\mathbb{K}_{\alpha}^p.$ Then, up
        to a subsequence,
        $$
             u_p \to u_{\infty} \quad  \mbox{as} \quad p \to \infty,
        $$
        uniformly in $\overline{\Omega}$ and weakly in
        $W^{1, q}(\Omega)$ for all $q>1.$

        Furthermore, any possible limit $u_{\infty}$ is Lipschitz continuous
        with
        $$
            \|\nabla u_{\infty}\|_{L^{\infty}(\Omega)} \leq 1.
        $$
    \end{lemma}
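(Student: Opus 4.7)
The plan has three ingredients: first, obtain a bound on $\|\nabla u_p\|_{L^p(\Omega)}$ that is uniform in $p$ (and in fact has limsup at most $1$) by testing the minimality of $u_p$ against a $1$-Lipschitz competitor; second, use Morrey's inequality together with Arzelà--Ascoli to produce a subsequence converging uniformly in $\overline\Omega$ and weakly in every $W^{1,q}$; third, transfer the unit Lipschitz bound to $u_\infty$ by interpolation and sending $q\to\infty$.

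To construct the competitor, pick $a>0$ with $\Leb(\{x\in\Omega:\dist(x,\partial\Omega)<a\})=\alpha$, fix $\varepsilon\in(0,a]$ small, and set
\[
\phi(x)\defeq \max\bigl\{\varepsilon(1-\dist(x,\partial\Omega)/a),\,0\bigr\}.
\]
Then $\phi\in W^{1,\infty}(\Omega)$ with $\|\nabla\phi\|_{L^\infty}\le 1$ and $\Leb(\{\phi>0\})\le\alpha$, and the hypothesis $\int_{\partial\Omega}g\,d\H>0$ gives $\mathcal{J}_p[\phi]\le \alpha/p-\varepsilon\int_{\partial\Omega}g\,d\H$, bounded above (eventually negative) uniformly in $p$. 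On the other hand, the lower bound from the proof of Theorem \ref{ThmExistMin} together with Morrey's inequality (Theorem \ref{MorIneq})---which applies since every minimizer $u_p$ vanishes somewhere in $\overline\Omega$ by continuity (for $p>N$) and the constraint $0<\alpha<|\Omega|$ via the intermediate value theorem---yields
\[
\mathcal{J}_p[u_p]\ge \frac{1}{p}\|\nabla u_p\|_{L^p(\Omega)}^p-C\|\nabla u_p\|_{L^p(\Omega)},
\]
with $C=C(N,\|g\|_{L^1(\partial\Omega)},\Omega)$ independent of $p$. Combining the two inequalities and analysing the resulting scalar estimate for $a\defeq\|\nabla u_p\|_{L^p(\Omega)}$ yields $\limsup_{p\to\infty}\|\nabla u_p\|_{L^p(\Omega)}\le 1$.

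Morrey's inequality then also produces a uniform $L^\infty$ bound and $(1-N/p)$-H\"older equicontinuity of the family $(u_p)_{p>N}$, so Arzelà--Ascoli yields uniform convergence $u_{p_j}\to u_\infty$ in $\overline\Omega$ along a subsequence. For each fixed $q>1$, H\"older's inequality gives $\|\nabla u_p\|_{L^q(\Omega)}\le |\Omega|^{1/q-1/p}\|\nabla u_p\|_{L^p(\Omega)}$; a diagonal extraction then refines the subsequence so that $u_{p_j}\rightharpoonup u_\infty$ weakly in $W^{1,q}(\Omega)$ for every $q\in(1,\infty)$.

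Finally, weak lower semicontinuity of the $L^q$-norm and the interpolation above give
\[
\|\nabla u_\infty\|_{L^q(\Omega)}\le \liminf_{p\to\infty}|\Omega|^{1/q-1/p}\|\nabla u_p\|_{L^p(\Omega)}\le |\Omega|^{1/q}.
\]
Sending $q\to\infty$ and ruling out $\|\nabla u_\infty\|_{L^\infty}>1$ by a concentration argument (if $|\nabla u_\infty|\ge 1+\delta$ on a set $A$ of positive measure, then $\|\nabla u_\infty\|_{L^q}\ge(1+\delta)|A|^{1/q}$ would eventually exceed $|\Omega|^{1/q}$) yields $\|\nabla u_\infty\|_{L^\infty(\Omega)}\le 1$. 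The most delicate step is the first one: securing \emph{uniform-in-$p$} constants in both the competitor's energy (which is why $\|\nabla\phi\|_{L^\infty}\le 1$ is essential, so that $\int_\Omega|\nabla\phi|^p\,dx$ stays $O(1)$ rather than growing with $p$) and in the Morrey inequality, so that the two inequalities combine to give the sharp asymptotic bound $\limsup\|\nabla u_p\|_{L^p}\le 1$, which is ultimately what forces the unit Lipschitz constant of $u_\infty$.
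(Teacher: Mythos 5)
Your proof is correct, and the skeleton (uniform gradient bound, then Morrey plus Arzel\`a--Ascoli, then interpolation and weak lower semicontinuity with $q\to\infty$) matches the paper's. The one genuinely different ingredient is how you obtain $\limsup_{p\to\infty}\|\nabla u_p\|_{L^p(\Omega)}\le 1$: the paper tests the weak Euler--Lagrange equation with $u_p$ itself, invokes the trace inequality $\|u_p\|_{L^p(\partial\Omega)}\le (pC_0)^{1/p}\|u_p\|_{W^{1,p}(\Omega)}$, and absorbs $\|u_p\|_{L^p}$ via Morrey to get the pointwise-in-$p$ bound $\|\nabla u_p\|_{L^p(\Omega)}\le \mathfrak{C}_p\bigl(\int_{\partial\Omega}|g|^{p'}\,d\H\bigr)^{1/p}$ with $\mathfrak{C}_p\to 1$; you instead use only the minimality of $u_p$, comparing $\mathcal{J}_p[u_p]$ against the explicit $1$-Lipschitz admissible competitor $\phi$ (a quantitative version of the competitor already used in the paper's existence proof) and running the elementary scalar analysis of $\tfrac1p a^p - Ca\le \alpha/p$. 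Your route buys independence from the Euler--Lagrange equation and from the $p$-explicit trace constant, which makes it somewhat more robust; the paper's route yields a clean quantitative bound for each fixed $p$ rather than only asymptotically. Your verification that each $u_p$ vanishes somewhere (needed for Morrey with a $p$-uniform constant) is correctly grounded in $\Leb(\{u_p>0\})=\alpha\in(0,\Leb(\Omega))$ from Theorem \ref{ThmExistMin}, and the concluding interpolation, diagonal extraction and $q\to\infty$ limit coincide with the paper's argument.
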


    \begin{proof}
        By multiplying the equation by $u_p$ and integrating we obtain via H\"{o}lder
        inequality the following
        \begin{equation}\label{EqEst}
            \displaystyle \int_{\Omega} |\nabla u_p|^pdx =
            \int_{\partial \Omega} gu_{p}d\H
            \leq \|g\|_{L^{p^{\prime}}(\partial \Omega)}
            \|u_p\|_{L^{p}(\partial \Omega)}.
        \end{equation}
        Now, let us recall the trace inequality from \cite[Theorem 1, page 258]{Evans}
        $$
            \displaystyle \|u_p\|_{L^{p}(\partial \Omega)} \leq
            \sqrt[p]{pC_0}\|u_p\|_{W^{1, p}(\Omega)},
        $$
        where $C_0$ is a constant that does not depend on $p$. By substituting such estimate
        in \eqref{EqEst} we obtain
        \begin{equation}\label{EqEst2}
            \displaystyle \int_{\Omega} |\nabla u_p|^pdx \leq
            \sqrt[p]{pC_0}\|g\|_{L^{p^{\prime}}(\partial \Omega)}\|u_p\|_{W^{1, p}(\Omega)}.
        \end{equation}

        On the other hand,
        since $\Leb(\{u_p>0\})= \alpha< \Leb(\Omega)$ (see Theorem \ref{ThmExistMin}),
        for $p>N$ we get from Theorem \ref{MorIneq}
        the following
        \begin{equation}\label{Eq3.3}
            \|u_p\|_{L^p(\Omega)}
            \leq C(N, p, \Omega)\|\nabla u_p\|_{L^{p}(\Omega)},
        \end{equation}
        where $C(N, p, \Omega)$ is uniformly bounded in $p.$

       Connecting the estimate \eqref{Eq3.3} with \eqref{EqEst2} we conclude that
        $$
            \displaystyle \int_{\Omega} |\nabla u_p|^pdx \leq \sqrt[p]{pC_0}
            C(n, p,\Omega,)
            \|g\|_{L^{p^{\prime}}(\partial \Omega)}\|\nabla u_p\|_{L^{p}(\Omega)},
        $$
        which implies that
        $$
            \displaystyle \|\nabla u_p\|_{L^{p}(\Omega)} \leq
             \mathfrak{C}_p\left(\int_{\partial \Omega} |
             g|^{p^{\prime}}\right)^{\frac{1}{p}},
        $$
        where $\mathfrak{C}_p \to 1$ as $p \to \infty$.
        Now, fix $q>N$, and take $p > q$. Thus, we have
        \begin{equation}\label{eq:auxlq}
	         \displaystyle \|\nabla u_p\|_{L^{q}(\Omega)} \leq
            \Leb(\Omega)^{\frac{1}{q}-\frac{1}{p}}\|\nabla u_p\|_{L^{p}(\Omega)}
            \leq \mathfrak{C}_p \Leb(\Omega)^{\frac{1}{q}-\frac{1}{p}}
            \left(\int_{\partial \Omega} |g|^{p^{\prime}}\right)^{\frac{1}{p}}.
        \end{equation}
        Since $\mathfrak{C}_p \Leb(\Omega)^{\frac{1}{q}-\frac{1}{p}}
        \to \Leb(\Omega)^{\frac{1}{q}}$ as $p \to \infty,$
        we get that, up to a subsequence,
        $$
             u_p \to u_{\infty} \quad  \mbox{as} \quad p \to \infty,
        $$
        uniformly in $\overline{\Omega}$ and weakly in $W^{1, q}(\Omega).$
        Notice that, by \eqref{eq:auxlq},
        $$
             \|\nabla u_{\infty}\|_{L^{q}(\Omega)} \leq \Leb(\Omega)^{\frac{1}{q}}.
        $$
        Since that the previous inequality holds for every $q>N$, we conclude that
        $u_{\infty} \in W^{1, \infty}(\Omega)$.
        Furthermore, taking the limit as $q \to \infty$ we get
        $\|\nabla u_{\infty}\|_{L^{\infty}(\Omega)} \leq 1$.
    \end{proof}

 As a consequence, we obtain the following corollary.

    \begin{corollary} If $g \geq 0$, then $\| \nabla u_{\infty}\|_{L^{\infty}(\Omega)}= 1$.
    \end{corollary}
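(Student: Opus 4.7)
The upper bound $L := \|\nabla u_\infty\|_{L^{\infty}(\Omega)} \le 1$ is already provided by Lemma \ref{LemmaWUniEst}, so the task is to rule out $L<1$. My plan rests on two minimality comparisons against carefully chosen competitors, together with a sharp scaling.

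First I would show that $\int_{\partial\Omega} g\, u_\infty\, d\H > 0$. Picking $a>0$ so that $\Leb(\{x\in\Omega\colon \mathrm{dist}(x,\partial\Omega)<a\}) = \alpha$, I use the distance-profile competitor $v_0(x) := (a - \mathrm{dist}(x,\partial\Omega))_+$, which lies in $\mathbb{K}_\alpha^p$ and satisfies $|\nabla v_0|\le 1$ a.e., $v_0\equiv a$ on $\partial\Omega$, and $\int_\Omega |\nabla v_0|^p\,dx \le \alpha$. Testing \eqref{eqp} against $u_p$ itself yields the energy identity $\int_\Omega |\nabla u_p|^p\,dx = \int_{\partial\Omega} g u_p\, d\H$, so that $\mathcal{J}_p[u_p] = -\tfrac{p-1}{p}\int_{\partial\Omega} g u_p\,d\H$. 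Comparing with $\mathcal{J}_p[v_0]\le \alpha/p - a\int_{\partial\Omega} g\,d\H$ and passing to the limit $p\to\infty$ (using the uniform convergence $u_p\to u_\infty$) gives $\int_{\partial\Omega} g u_\infty\,d\H \ge a\int_{\partial\Omega} g\,d\H > 0$.

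Next I would verify that $\lambda u_\infty$ is an admissible competitor in $\mathbb{K}_\alpha^p$ for every $\lambda>0$. The only nontrivial check is the volume constraint: uniform convergence guarantees $\{u_\infty>\delta\}\subset\{u_p>0\}$ whenever $\|u_p-u_\infty\|_{L^\infty(\Omega)}<\delta$, so $\Leb(\{u_\infty>\delta\})\le\alpha$; letting $\delta\downarrow 0$ gives $\Leb(\{u_\infty>0\})\le\alpha$, and clearly $\{\lambda u_\infty>0\} = \{u_\infty>0\}$ for $\lambda>0$.

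Then comes the decisive scaling step. Suppose for contradiction $L<1$ and use $\lambda u_\infty$ with $\lambda := 1/L>1$ as competitor in $\mathcal{J}_p[u_p]\le \mathcal{J}_p[\lambda u_\infty]$. The choice $\lambda L=1$ makes the gradient term collapse in the limit:
\[
 \tfrac{\lambda^p}{p}\int_\Omega |\nabla u_\infty|^p\,dx \;\le\; \tfrac{(\lambda L)^p\,\Leb(\Omega)}{p} \;=\; \tfrac{\Leb(\Omega)}{p} \;\to\; 0,
\]
so letting $p\to\infty$ and invoking the first step produces $-\int_{\partial\Omega} g u_\infty\,d\H \le -\tfrac{1}{L}\int_{\partial\Omega} g u_\infty\,d\H$, which forces $L\ge 1$. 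The main obstacle is the tension in the scaling parameter: I need $\lambda>1$ to get a strict contradiction in the boundary integral, yet also $\lambda L\le 1$ so that $\int_\Omega |\nabla(\lambda u_\infty)|^p\,dx$ stays bounded uniformly in $p$. These two requirements coexist only when $L<1$, and the critical choice $\lambda=1/L$ is precisely what extracts the contradiction.
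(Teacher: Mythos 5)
Your argument is correct, but it follows a genuinely different route from the paper's. The paper proves the corollary by testing the weak formulation of \eqref{eqp} with an arbitrary $\Theta$, applying H\"older's inequality, and passing to the limit to obtain the duality-type estimate $\int_{\partial \Omega} g\Theta\, d\H \leq \|\nabla \Theta\|_{L^{\infty}(\Omega)} \int_{\partial \Omega} g u_{\infty}\, d\H$ for every test function; the choice $\Theta = u_{\infty}$ then forces $\|\nabla u_{\infty}\|_{L^{\infty}(\Omega)} \geq 1$. You instead argue purely variationally: minimality of $u_p$ against the distance-profile competitor gives the explicit lower bound $\int_{\partial \Omega} g u_{\infty}\, d\H \geq a \int_{\partial \Omega} g\, d\H > 0$, and minimality against the rescaled competitor $\lambda u_{\infty}$ with the critical choice $\lambda = 1/L$ kills the gradient term and yields $(\lambda - 1)\int_{\partial \Omega} g u_{\infty}\, d\H \leq 0$, a contradiction when $L<1$. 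Your approach never invokes the Euler--Lagrange equation beyond the energy identity, and it has the merit of making explicit the positivity $\int_{\partial \Omega} g u_{\infty}\, d\H > 0$, which the paper's proof uses only implicitly (the inequality $X \leq \|\nabla u_{\infty}\|_{L^{\infty}} X$ is vacuous if $X=0$). The paper's route, in exchange, delivers the stronger intermediate inequality valid for all $\Theta$, which foreshadows the Kantorovich duality exploited in Section \ref{Sec4}. Two small points you should patch: (i) the scaling $\lambda = 1/L$ is undefined if $L=0$, but that case is excluded at once, since a constant $u_{\infty}$ with $\Leb(\{u_{\infty}>0\}) \leq \alpha < \Leb(\Omega)$ must be nonpositive, contradicting your first step (alternatively, when $L=0$ any $\lambda>1$ already gives the contradiction); (ii) the existence of $a$ with $\Leb(\{\mathrm{dist}(x,\partial\Omega)<a\})$ exactly equal to $\alpha$ deserves a word, though choosing $a$ with this measure merely $\leq \alpha$ and $a>0$ suffices for your purposes, and the paper itself makes the same construction in the proof of Theorem \ref{ThmExistMin}.
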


    \begin{proof} One more time by multiplying the equation by $u_p$, integrating, and using Lemma \ref{LemmaWUniEst} we obtain
        \begin{equation}\label{EqEst2}
            \displaystyle \lim_{p \to +\infty}\int_{\Omega} |\nabla u_p|^pdx =
            \lim_{p \to +\infty} \int_{\partial \Omega} gu_{p}d\H
            = \int_{\partial \Omega} gu_{\infty}d\H.
        \end{equation}
    Now, if we multiply the equation by a test function $\Theta$, we have by using the H\"{o}lder inequality (for $p \gg 1$ large enough) the following (for $\varepsilon(p) = \text{o}(1)$ as $p \to \infty$)
    $$
\begin{array}{rcl}
  \displaystyle \int_{\partial \Omega} g\Theta d\H & \leq  & \displaystyle \left(\int_{\Omega} |\nabla \Theta|^pdx\right)^{\frac{1}{p}}\left(\int_{\Omega} |\nabla u_p|^pdx\right)^{\frac{p-1}{p}} \\
   & \leq  & \displaystyle \left(\int_{\Omega} |\nabla \Theta|^pdx\right)^{\frac{1}{p}}\left(\int_{\partial \Omega} gu_{\infty}d\H + \varepsilon(p)\right)^{\frac{p-1}{p}}.
\end{array}
$$
Passing to the limit as $p \to \infty$ we conclude that
$$
  \displaystyle \int_{\partial \Omega} g\Theta d\H  \leq \|\nabla \Theta\|_{L^{\infty}(\Omega)}.\int_{\partial \Omega} gu_{\infty}d\H.
$$
Finally, by taking as test function $u_{\infty}$ itself and using once again Lemma \ref{LemmaWUniEst} we obtain as a consequence the desired conclusion.
\end{proof}

Now, we supply the proof of Theorem \ref{teo.1.intro}.

    \begin{proof}[{\bf Proof of Theorem \ref{teo.1.intro}}] By Lemma \ref{LemmaWUniEst}, up
        to a subsequence,
        $$
             u_p \to u_{\infty} \quad  \mbox{as} \quad p \to \infty,
        $$
        uniformly in $\overline{\Omega}$ and weakly in $W^{1, q}(\Omega)$ for all $q>1.$

        On the other hand, using a test function $\Theta$ with
        $\|\nabla \Theta\|_{L^p(\Omega)}\leq 1$, in the variational minimization
        problem solved by $u_p$ we obtain
        $$
          \displaystyle \frac1p \int_\Omega |\nabla \Theta|^p dx -
            \int_{\partial \Omega}g  \Theta d\H  \geq   \displaystyle  \frac1p \int_\Omega |\nabla u_p |^p dx - \int_{\partial \Omega} g u_p d\H  \geq  \displaystyle -  \int_{\partial \Omega}g u_p d\H.
        $$
        Passing to the limit as $p \to \infty$ we get that
        $$
            \displaystyle \int_{\partial \Omega} g \Theta d\H\leq
            \int_{\partial \Omega} gu_{\infty}d\H.
        $$
        Therefore, the limit function $u_{\infty}$ is a solution to the maximization problem
        $$
            \int_{\partial \Omega} gu_{\infty}d\H = \max\left\{ \int_{\partial \Omega}
            gv d\H\ \colon v \in W^{1, \infty}\left(\Omega\right),
            \|\nabla v\|_{L^{\infty}(\Omega)}\leq 1 \,\,\text{and} \,\,
            \Leb(\{v>0\}) \leq \alpha\right\}.
        $$
        This finishes the proof.
    \end{proof}

   \begin{remark}Notice that it is not immediate that a maximizer of
   $$
   \max\left\{ \int_{\partial \Omega}
            gv d\H\ \colon v \in W^{1, \infty}\left(\Omega\right),
            \|\nabla v\|_{L^{\infty}(\Omega)}\leq 1 \,\,\text{and} \,\,
            \Leb(\{v>0\}) \leq \alpha\right\}
   $$
   verifies
    $$\Leb(\{u_\infty>0\})=\alpha.$$
\end{remark}

    Now, we prove Theorem \ref{teo.2.intro}:

    \begin{proof}[{\bf Proof of Theorem \ref{teo.2.intro}}]
        First of all, let us verify that
        $$
           -\Delta_{\infty} u_{\infty} = 0 \quad \text{in} \quad \{u_{\infty}>0\} \cup
           \{u_{\infty}<0\}
       $$
        in the viscosity sense.

        We start proving that it is a subsolution. To this end,
        fix $x_0 \in \{u_{\infty}>0\} \cup \{u_{\infty}<0\}$ and
        let $\phi \in C^2(B_{\varepsilon}(x_0))$ (for $0<\varepsilon\ll1$)
        be a test function such that $u_{\infty}-\phi$ has a strict maximum at $x_0$.
        From uniform convergence, up to a subsequence, $u_{p} \to u_{\infty},$
        we get that for each $p\geq N$,  $u_{p}-\phi$ has a maximum at some point
        $x_{p} \in (\{u_{\infty}>0\} \cup \{u_{\infty}<0\})\cap B_{\varepsilon}(x_0)$,
        where $x_{p} \to x_0$. Since that $u_{p}$ is a weak subsolution
        (resp. viscosity subsolution according to Lemma \ref{equisols}) of
        $$
            -\Delta_{p} u_{p}  = 0 \quad \text{in} \quad \{u_{p}>0\} \cup \{u_{p}<0\}
        $$
        we get that
        $$
            F_{p}\left(x_{p}, \nabla \phi(x_{p}), D^2 \phi(x_{p})\right) \leq 0.
        $$

        Now, if $|\nabla \phi(x_0)| = 0$ then trivially we get
        $-\Delta_{\infty}\phi(x_0) \leq 0$. On the other hand, if
        $|\nabla \phi(x_0)| \neq 0$, then we have that $|\nabla \phi(x_{p})| \neq 0$
        for large values of $p$.
        Consequently
        $$
            -\nabla \phi(x_{p})^TD^2 \phi(x_{p})\cdot \nabla \phi(x_{p})
            \leq \frac{1}{p-2}|\nabla \phi(x_{p})|^{2}
            \Delta \phi(x_{p}).
        $$
        Finally, taking the limit as $p \to \infty$ in the above inequality
        we conclude that
        $$
            -\Delta_{\infty}\phi(x_0) \leq 0,
        $$
        showing $u_{\infty}$ is a viscosity subsolution, as desired.

        Similarly one can prove that $u_{\infty}$ is a viscosity supersolution.
        We omit this part here.

        \medskip

        Next, let us verify the limit profile at free boundary points. We will need the
        lower and upper semi-continuous envelopes, since the limit operator
        is discontinuous across the phase transitions.

        Fixed $x_0 \in \partial \{u_{\infty} = 0\} \cap \Omega$,
        let $\phi \in C^2(B_{\varepsilon}(x_0))$ be such that
        $u_{\infty}(x_0) = \phi(x_0)=0$ and $u_{\infty}(x) < \phi(x)$ holds for
        $x \neq x_0$ in $B_{\varepsilon}(x_0)$.
        We would like to prove the following
        $$
            F_{\ast}(x_0, \nabla \phi(x_0), D^2 \phi(x_0)) \leq 0,
        $$
        where
        $$
            F_{\ast}(x_0, \nabla \phi(x_0), D^2 \phi(x_0))
            \defeq \min\{\phi(x_0), -\Delta_{\infty}\phi(x_0)\}
        $$
        is the \textit{lower semi-continuous envelope} of $F_{\infty}$ in
        $B_{\varepsilon}(x_0)$. As before, there exists a sequence
        $B_{\varepsilon}(x_0) \ni x_{p} \to x_0$ such that $u_{p}-\phi$ has a local
        maximum at $x_{p}$. If $\nabla \phi(x_0) = 0$, then there is nothing to proof.
        Now, if $|\nabla \phi(x_0)| \neq 0$ we must consider two possibilities:

       {\bf Case 1.} If $u_{p_j}(x_{p_j})<0$ or $u_{p_j}(x_{p_j}) > 0$ for a subsequence
                $(p_j)_{j\geq 1}$. In this case, since $u_{p_j}$ is a weak sub-solution
                (resp. viscosity super-solution) to \eqref{eqp}, we have that
                $$
                        F_{p_j}\left(x_{p_j}, \nabla \phi(x_{p_j}), D^2 \phi(x_{p_j})\right) \leq 0.
                $$
                Finally, passing to the limit as $p_j \to \infty$ we obtain
                $$
                    -\Delta_{\infty} \phi(x_0) \leq 0.
                $$

            {\bf Case 2.} If $u_{p_j}(x_{p_j})  =  0$ for a subsequence $(p_j)_{j\geq 1}$.
                In this case the conclusion is immediate since using continuity we get $\phi(x_0) = 0$.

        For the super-solution case fix $x_0 \in \partial \{u_{\infty} = 0\} \cap \Omega$
        and $\phi \in C^2(B_{\varepsilon}(x_0))$ such that $u_{\infty}(x_0) = \phi(x_0)=0$
        and $u_{\infty}(x) > \phi(x)$ holds for $x \neq x_0$ in $B_{\varepsilon}(x_0)$.
        This time we would like to prove the following:
        $$
            F^{\ast}(x, \nabla \phi(x_0), D^2 \phi(x_0)) \geq 0,
        $$
        where
        $$
            F^{\ast}(x, \nabla \phi(x_0), D^2 \phi(x_0))
            \defeq \max\{\phi(x_0), -\Delta_{\infty}\phi(x_0)\}
        $$
        is the \textit{upper semi-continuous envelope} of $F_{\infty}$ in $\Omega$.
        The analysis for this case runs similarly to previous one.

    Next, we deal with the boundary condition.
    First, let $\phi \in C^2(\overline{\Omega})$ be a test function and assume that $u_{\infty}-\phi$ has a strict minimum at $x_0 \in \partial \Omega$ with $u_{\infty}(x_0)=\phi(x_0)\neq 0$ and $g(x_0)>0$. One more time, from uniform convergence $u_{p_j} \to u_{\infty}$ we obtain that $u_{p_j}-\phi$ has a minimum at some point $x_{p_j} \in \overline{\Omega} $, where $x_{p_j} \to x_0$. Now, if $x_{p_j} \in \Omega$ for infinitely many values of $j$, then by arguing as before we conclude that
    $$
    -\Delta_{\infty}\phi(x_0) \geq 0 \quad (\text{resp.} \quad \max\{-\Delta_{\infty}\phi(x_0), \phi(x_0)\} \geq 0 \quad \text{at free boundary points}).
    $$
    However, if $x_{p_j} \in \partial \Omega$, then we have, from Remark \ref{RemMonotB_p}, that
    $$
    H_{p_j}(x_{p_j}, \nabla \phi(x_{p_j})) \geq 0.
    $$
    Taking into account that $g(x_0)>0$, then $\nabla \phi(x_0) \neq 0$, and we obtain
    $$
    |\nabla \phi(x_0)| \geq 1 \quad \text{and} \quad \nabla \phi(x_0)\cdot \eta(x_0) \geq 0.
    $$
    In conclusion, if $u_{\infty}-\phi$ has a strict minimum at $x_0 \in \partial \Omega$ with $g(x_0)>0$, then we have the following inequality
    $$
    \max\left\{  -\Delta_{\infty}\phi(x_0), \,\,\,\min\left\{|\nabla \phi(x_0)| - 1, \,\,\,\frac{\partial \phi}{\partial \eta}(x_0) \right\}\,\,\right\} \geq 0,
    $$
    $$
    \left(\text{resp.}\,\,\,\max\left\{  \max\{-\Delta_{\infty}\phi(x_0), \phi(x_0)\}, \,\,\,\min\left\{|\nabla \phi(x_0)| - 1, \,\,\,\frac{\partial \phi}{\partial \eta}(x_0) \right\}\,\,\right\} \geq 0\,\,\, \text{at free boundary points}\right).
    $$

    For the next case, let us assume that $u_{\infty}-\phi$ has a strict maximum at $x_0 \in \partial \Omega$
    with $u_{\infty}(x_0)=\phi(x_0)\neq 0$ and $g(x_0)>0$. With the same notations as before, if $x_{p_j} \in \Omega$ for infinitely many $j$, then we conclude that
    $$
    -\Delta_{\infty}\phi(x_0) \leq 0 \quad (\text{resp.} \quad \min\{-\Delta_{\infty}\phi(x_0), \phi(x_0)\} \leq 0 \quad \text{at free boundary points}).
    $$
    On the other hand, when $x_{p_j} \in \partial \Omega$, using
    $$
    H_{p_j}(x_{p_j}, \nabla \phi(x_{p_j})) \leq 0,
    $$
    we get that, if $\nabla \phi(x_0)-1>0$, then $\frac{\partial \phi}{\partial \eta}(x_0) \geq 0$. We have that the following inequality holds
    $$
    \min\left\{-\Delta_{\infty}\phi(x_0), \,\,\,\min\left\{|\nabla \phi(x_0)| - 1, \,\,\, \frac{\partial \phi}{\partial \eta}(x_0) \right\}\,\,\right\} \leq 0,
    $$
    $$
    \left(\text{resp.}\,\,\,\min\left\{\min\{-\Delta_{\infty}\phi(x_0), \phi(x_0)\}, \,\,\,\min\left\{|\nabla \phi(x_0)| - 1, \,\,\, \frac{\partial \phi}{\partial \eta}(x_0) \right\}\,\,\right\} \leq 0 \,\,\, \text{at free boundary points}\right).
    $$

    The case in which $u_{\infty} - \phi$ has a strict maximum / minimum at $x_0 \in \{g<0\}$
    with $u_{\infty}(x_0)=\phi(x_0)\neq 0$ can be handled similarly.

    Now, if $u_{\infty}-\phi$ has a strict minimum at $x_0 \in \partial \Omega$ with $u_{\infty}(x_0)=\phi(x_0)\neq 0$ and $x_0 \in \{g=0\}^{\circ}$ then we have
    $$
    H_{p_j}(x_{p_j}, \nabla \phi(x_{p_j}))\geq 0.
    $$
    Thus, by passing to the limit we obtain $\frac{\partial \phi}{\partial \eta}(x_0)\geq 0$. Therefore, the following inequality holds
    $$
    \max\left\{  -\Delta_{\infty}\phi(x_0),  \,\,\,\frac{\partial \phi}{\partial \eta}(x_0) \right\} \geq 0
    $$
    $$
    \left(\text{resp.}\,\,\,\max\left\{  \max\{-\Delta_{\infty}\phi(x_0), \phi(x_0)\}, \,\,\,\frac{\partial \phi}{\partial \eta}(x_0) \right\} \geq 0\,\,\,\text{at free boundary points}\right).
    $$

    Now, if $u_{\infty}-\phi$ has a strict maximum at $x_0 \in \partial \Omega$ with $u_{\infty}(x_0)=\phi(x_0)\neq 0$ and $x_0 \in \{g=0\}^{\circ}$ then we have
    $$
    H_{p_j}(x_{p_j}, \nabla \phi(x_{p_j}))\leq 0.
    $$
    Thus, by taking the limit as $p_j \to \infty$ we obtain $\frac{\partial \phi}{\partial \eta}(x_0)\leq 0$. Therefore, the following inequality holds
    $$
    \min\left\{ -\Delta_{\infty}\phi(x_0), \,\,\,\frac{\partial \phi}{\partial \eta}(x_0) \right\} \leq 0,
    $$
    $$
    \left(\text{resp.}\,\,\,\min\left\{    \min\{-\Delta_{\infty}\phi(x_0), \phi(x_0)\}, \,\,\,\frac{\partial \phi}{\partial \eta}(x_0) \right\} \leq 0\,\,\,\text{at free boundary points}\right).
    $$

    Finally, we just observe that we can handle the cases in which $u_{\infty}(x_0)=\phi(x_0)\neq 0$ and $x_0
    \in \partial \{g>0\}$ with $g(x_0)=0$, $x_0
    \in \partial \{g<0\}$ with $g(x_0)=0$ or $x_0
    \in \partial \{g>0\}\cap \partial \{g<0\}$ with $g(x_0)=0$ considering that the involved sequence $x_{p_j}$
    can be such that $g(x_{p_j})>0$, $g(x_{p_j})<0$ or $g(x_{p_j})=0$. Notice that in these cases
    we find the upper (or lower) semicontinuous envelope of $H$ that involve that $\max$ or the $\min$ of the
    previous cases. We leave the details to the reader.
    \end{proof}


\section{Proof of the Monge-Kantorovich type results}\label{Sec4}

   In this short section we include the proof of our Monge-Kantorovich type results.
    The datum $g$ is assumed to be nonnegative, and therefore the same property holds true for the solutions $u_p$ (see Remark \ref{rem.2.6}).

    \begin{proof}[{\bf Proof of Theorem \ref{teo.3.intro}}]
        Following \cite{EvGang} we define the transport set for a maximizer
        $u_{\infty}$ of \eqref{eqMax_infty}:
        $$
            \mathrm{T}(u_{\infty}) \coloneqq
            \left\{ x \in \Omega \colon \exists \,\,y \in \partial \Omega
            \mbox{ with } |u_{\infty}(x) - u_{\infty}(y) |= |x-y| \right\}.
        $$
        Moreover, we define a transport ray by
        $$
            \mathrm{R}_{x} \coloneqq
            \{w \in \mathrm{T}(u_{\infty})
            \colon |u_{\infty}(x) - u_{\infty}(w)| = |x - w|\}.
        $$
        Observe that any two transport rays cannot intersect in $\Omega$, unless
        they are identical. In fact, assume $w \in \mathrm{T}(u_{\infty})$,
        and that there exist $x, y \in \Omega$ such that
        $$
            u_{\infty}(x) - u_{\infty}(w) = |x-w| \quad
             \text{and} \quad  u_{\infty}(w) - u_{\infty}(y) = |w-y|.
        $$
        Hence, from Lipschitz continuity for $u_{\infty}$ we obtain
        $$
                |x - y| \leq |x-w| + |w-y|
                = u_{\infty}(x) - u_{\infty}(y)
                \leq |u_{\infty}(x) - u_{\infty}(y)|
                \leq |x-y|,
        $$
        which is impossible, unless that $x, y$ and $w$ are collinear points.

        Now, we observe that for each $u_p$ there exists a sequence $\epsilon_j \to 0+$ as $j \to +\infty$ such that the set $\mathcal{S}_j \defeq \{ u_p > \epsilon_j \}$ has finite perimeter for every $j \in \mathbb{N}$ (cf. \cite[Theorem 1, \S 5.5]{EvGar}). Hence, there is a measure supported on the set
        $$
            \partial \{ u_p > \epsilon_j\} \cap \Omega
        $$
        defined by
        $$
            \nu_{p,\epsilon_j} = |\nabla u_p|^{p-2} \frac{\partial u_p}{\partial \eta},
        $$
        where $\eta$ is the unit outer normal to $\partial \{ u_p >\epsilon_j \} \cap \Omega$.
        Moreover, this measure is non-negative and verifies
        $$
            \int_{\Omega} d\nu_{p,\epsilon_j}  = \int_{\partial \Omega \cap \{ u_p >\epsilon_j \}} gd\H.
        $$
        In fact, to show this identity one just have to recall that
        $\Delta_p u_p=0$ in $\{ u_p > \epsilon_j\}$.

        Now to obtain the measure $\nu_\infty$ we just have to take the limit
        (along a subsequence if necessary)
        of $\nu_{p,\epsilon_j}$ (first we take $\epsilon_j \to 0+$ and then $p\to \infty$). This limit measure $\nu_\infty$ is supported on
        $$
            \partial \{ u_\infty >0\} \cap \Omega
        $$
        and verifies the compatibility condition
        $$
            \int_{\partial \{ u_\infty >0\} \cap \Omega} d\nu_\infty = \int_{\partial \Omega} gd\H.
        $$
        As the transport rays do not intersects, using our previous results, we obtain that
        $$
           \int_{\partial \Omega} u_\infty gd\H= \int_{\overline{\Omega}} u_\infty (gd\H - d\nu_\infty) = \max_{\omega}
            \left\{ \int_{\overline{\Omega}} \omega (gd\H - d\nu_\infty)  \right\}.
        $$
        where the maximum is taken in the set of $1-$Lipschitz functions:
        $$
            \displaystyle 1-\text{Lip}(\overline{\Omega})
            \coloneqq \left\{\Phi\colon \overline{\Omega} \to \R \colon
            \sup_{x, y \in \overline{\Omega}, \, \atop{x\neq y}} \frac{|\Phi(x)-\Phi(y)|}{|x-y|}
            \leq 1  \right\}.
        $$
        Finally, we notice that, since $\Leb(\{v_\infty >0\}) \leq \alpha$, we get that the
        transport set associated to this optimal transport problem has the property
        $\Leb (\mathrm{T}(u_{\infty}))\leq \alpha$.
\end{proof}

    Finally, we supply the proof of Theorem \ref{teo.5.intro}.

\begin{proof}[{\bf Proof of Theorem \ref{teo.5.intro}}]

        Now, our aim is to compute the maximum among every possible transport costs
        of $\mu = g\H \llcorner \partial \Omega$ to $\nu$ with the restriction that the transport set has measure less or equal than $\alpha$, that is,
$$
   \displaystyle \mathrm{W}^1_{\alpha}(\mu, \nu) \defeq \max_{\nu \in \mathcal{M}(\Omega), \,\omega \in 1-\text{Lip}(\overline{\Omega}), \atop{\Leb (\mathrm{T}(\omega))\leq \alpha}} \left\{\int_{\overline{\Omega}} \omega d(\mu-\nu) \right\}.
$$
    To this end, we just notice that $\nu_\infty$ (our limit measure) is a competitor in this maximization problem and hence the total cost for the limit problem verifies
$$
 \displaystyle  \int_{\partial \Omega} u_\infty gd\H = \int_{\overline{\Omega}} u_\infty (gd\H - d\nu_\infty) \leq \max_{\nu \in \mathcal{M}(\Omega), \, \omega \in 1-\text{Lip}(\overline{\Omega}), \atop{ \Leb (\mathrm{T}(\omega))\leq \alpha}} \left\{\int_{\overline{\Omega}} \omega d(\mu-\nu) \right\}.
$$
    Now, notice that, since we have that the total mass of $\nu$ is equal to $\displaystyle \int_{\partial \Omega} gd\H$, we can add a constant to $\omega$ (if necessary) and assume that $\displaystyle \inf_{\mathrm{T(\omega)}} \omega =0$.  Hence,
$$
\begin{array}{rcl}
\displaystyle \max_{\nu \in \mathcal{M}(\Omega),\,\omega \in 1-\text{Lip}(\overline{\Omega}), \atop{ \Leb (\mathrm{T}(\omega))\leq \alpha}} \left\{\int_{\overline{\Omega}} \omega d(\mu-\nu) \right\} & = & \displaystyle \max_{\omega \in 1-\text{Lip}(\overline{\Omega}), \atop{ \Leb (\mathrm{T}(\omega))\leq \alpha}} \max_{\nu \in \mathcal{M}(\Omega)}\left\{\int_{\overline{\Omega}} \omega d(\mu-\nu) \right\} \\
 & \leq & \displaystyle \max_{\omega \in 1-\text{Lip}(\overline{\Omega}), \atop{ \Leb (\mathrm{T}(\omega)) \leq \alpha}}\left\{\int_{\partial \Omega} \omega g d\H\right\}\\
 & = & \displaystyle \int_{\partial \Omega} u_\infty g d\H.
\end{array}
$$
    Therefore, we conclude that the obtained limit cost (the total cost of the transport of $g\H \llcorner \partial \Omega$ to $\nu_\infty$) gives the maximum possible among transport costs to nonnegative measures $\nu$ with measure of the involved transport set less or equal than $\alpha$.
    \end{proof}


\section{Examples}\label{Sec5}

    \begin{example}
        Consider the domain $\Omega = (-1, 1)$ and
        the boundary datum such that  $g(1)=g(-1)=A>0$.
        Thus, for fixed $\alpha \in (0, 2)$ and $t \in (0, 1)$
        the weak solution of
        $$
            \left\{
                \begin{array}{ll}
                    -(|u_p^{\prime}(x)|^{p-2}u_p^{\prime}(x))^{\prime}  =  0
                    & \text{in }  \quad (-1, \,t\alpha-1) \cup (1-(1-t)\alpha, \, 1),\\
                    u_p= 0 & \text{in } \quad \left[t\alpha-1, \,1-(1-t)\alpha\right],\\
                    |u_p^{\prime}(\pm 1)|^{p-2}u_p^{\prime}(\pm 1) \eta(\pm 1)  =  A,
                \end{array}
            \right.
        $$
        (notice that $u_p$ satisfies the volume constraint $\Leb(\{u_p>0\}) = \alpha$) is given by
        $$
            u_p(x) =
            \left\{
                \begin{array}{ll}
                     A^{\frac{1}{p-1}}\left[(t\alpha-1)-x\right] &
                     \text{if }  \quad x \in (-1,\, t\alpha-1),\\
                        0 & \text{if } \quad x\in \left[t\alpha-1, \,1-(1-t)\alpha\right],\\
                    A^{\frac{1}{p-1}}\left\{x-\left[1-(1-t)\alpha\right]\right\}
                    & \text{if }  \quad x \in (1-(1-t)\alpha, 1).
                \end{array}
            \right.
        $$
        Letting $p \to \infty$, we obtain the limiting profiles, for $t \in (0, 1)$,
        $$
            u_{\infty}(x) =
                \left\{
                    \begin{array}{ll}
                      (t\alpha-1)-x & \text{if } \quad  x \in (-1, \,t\alpha-1)\\
                      0 & \text{if } \quad x\in \left[t\alpha-1, \,1-(1-t)\alpha\right]\\
                        x-\left[1-(1-t)\alpha\right] & \text{if } \quad
                        x \in (1-(1-t)\alpha, \,1).
                    \end{array}
                \right.
         $$
         Notice that in this example we do not have uniqueness of a limit profile.
         Also note that the limit profiles are independent of $A$.

         \begin{multicols}{2}
	        \begin{tikzpicture} [scale=.70]
		        \draw[arrows=<->](0,-1)--(0,5);
   		        \draw[arrows=<->](-4.5,0)--(4.5,0);
 		        \draw[dashed] (-3,0) node[below]{$-1$} -- (-3,4) -- (0,4)
 		        node[right]{$A^{\frac{1}{(p-1)}}t\alpha$};
 		         \draw[dashed] (0,2.5)
 		         node[left]{$A^{\frac{1}{(p-1)}}(1-t)\alpha$} -- (3,2.5) -- (3,0)
 		        node[below]{$1$}  ;
 		        \draw [blue] (-3,4) -- (-1.5,0) --  (2,0) -- (3,2.5);
 		        \draw (-1.5,0)node[below]{$t\alpha$};
 		         \draw (1.5,0)node[below]{$(1-t)\alpha$};
 		        \draw (0,-2) node {$u_p$ with $t>\frac12$};
            \end{tikzpicture}

         \begin{tikzpicture} [scale=.70]
		        \draw[arrows=<->](0,-1)--(0,5);
   		        \draw[arrows=<->](-4.5,0)--(4.5,0);
 		        \draw[dashed] (-3,0) node[below]{$-1$} -- (-3,2) -- (0,2)
 		        node[right]{$t\alpha$};
 		         \draw[dashed] (0,.75)
 		         node[left]{$(1-t)\alpha$} -- (3,.75) -- (3,0)
 		        node[below]{$1$}  ;
 		        \draw[blue] (-3,2)--(-1.5,0)--(2,0)--(3,.75);
 		        \draw  (-1.5,0)node[below]{$t\alpha$};
 		        \draw (1.5,0)node[below]{$(1-t)\alpha$};
 		        \draw (0,-2) node {$u_\infty$ with $t>\frac12$};
            \end{tikzpicture}
        \end{multicols}
    \end{example}

    \begin{example} \label{rem.33}
        We could also consider in the previous example the case in which
        $g(-1)> g(1) > 0$. In this case, we obtain a unique minimizer
        $$
            u_p(x) = g(-1)^{\frac{1}{p-1}}[(\alpha-1)-x]_{+}
        $$
        and
        $$
            u_{\infty}(x) = [(\alpha-1)-x]_{+}
        $$
        as the unique limit as $p\to \infty$ (remark that this functions is also the unique solution to our limiting optimization problem). Note that in this case we have uniqueness
        of the limit profiles.

        Also notice that in this case the boundary condition $|u_p^{\prime}(x)|^{p-2}u_p^{\prime}(x) = g(x)$ holds only at $x=-1$ since at $x=1$ we have $u_p(1)=0$ and $|u_p^{\prime}(1)|^{p-2} u_p^{\prime}(1) = 0  \neq g(1)$.
\end{example}

\subsection*{Acknowledgments}
J.V. da Silva thanks Dept. Math/FCEyN from Universidad de Buenos Aires for providing a nice working environment during his Postdoctoral program. This work has been partially supported by Consejo Nacional de Investigaciones Cient\'{i}ficas y T\'{e}cnicas (CONICET-Argentina).

\end{document}